\documentclass{article}
\usepackage{graphicx} 
\usepackage[T1]{fontenc}
\usepackage[utf8]{inputenc}
\usepackage[english]{babel}
\usepackage{amsmath}
\usepackage{bm}
\usepackage{amsthm}
\usepackage{amssymb}
\usepackage{enumitem} 
\usepackage{tikz}
\usetikzlibrary{matrix}
\usepackage{pgfplots}
\usepackage{graphicx}
\usepackage{faktor}
\usepackage{dsfont}
\usepackage{xcolor, colortbl}
\usepackage{makecell}
\usepackage{caption}
\usepackage{subcaption}
\usepackage{comment}
\usepackage{setspace}
\usepackage[12pt]{moresize}
\usepackage{wasysym}
\usepackage{float}
\usepackage[margin=1in]{geometry}
\usepackage{mathrsfs}
\usepackage{url}
\usepackage{authblk}
\usepackage{esint}
\usepackage{bbm}
\usepackage{hyperref}
\usepackage{cleveref}
\hypersetup{
	colorlinks = true,
	urlcolor   = blue,
	citecolor  = blue,
	linkcolor  = blue,
}
\usepackage{titlesec}

\titleformat{\subsection}[runin]
  {\normalfont\bfseries}
  {\thesubsection}
  {0.5em}
  {}
  [.]

\titlespacing{\subsection}
  {0pt}      
  {1em}      
  {0.8em}    

\let\Re\undefined

\DeclareMathOperator{\Re}{Re}

\DeclareMathOperator{\diver}{\mathrm{div} }

\newcommand{\R}{\mathbb{R}}

\newcommand{\Z}{\mathbb{Z}}
\newcommand{\N}{\mathbb{N}}

\newcommand{\cL}{{\mathcal L}}
\newcommand{\C}{{\mathbb{C}}}
\newcommand{\cH}{{\mathcal H}}
\newcommand{\cJ}{{\mathcal J}}
\newcommand{\sL}{{\mathscr L}}

\newcommand{\im}{\mathrm{i}\,}

\theoremstyle{plain}
\newtheorem{thm}{Theorem}[]

\newtheorem*{thm*}{Theorem}
\newtheorem{lem}[thm]{Lemma}

\newtheorem{cor}[thm]{Corollary}

\theoremstyle{remark}
\newtheorem{rmk}[thm]{Remark}

\numberwithin{equation}{section}

\title{\Large \textbf{LINEAR STABILITY OF THE LAMB-CHAPLYGIN DIPOLE}\\[0.5cm]}

\author{%
FRANCESCO PIO NUMERO AND PAOLO VENTURA\thanks{
Institute of Mathematics, EPFL, Station 8, 1015 Lausanne, Switzerland.
Email: \texttt{francesco.numero@epfl.ch, paolo.ventura@epfl.ch}.}
}

\date{}

\begin{document}
\maketitle

\vspace{-0.3cm} 

\noindent\textsc{Abstract.} We describe the linearized dynamics near the Lamb-Chaplygin dipole, a classical traveling solution of the two-dimensional Euler equations. Exploiting the Hamiltonian structure of the system together with its symmetries, we identify all possible sources of linear instability. For general perturbations in $L^1\cap L^p$, $p>2$, growth can occur only through two explicit mechanisms triggered by: {\rm (i)} a nonzero circulation on the core of the dipole, and {\rm (ii)} a nontrivial component along the generalized eigenvectors associated with the eigenvalue $0$. In particular, we completely classify the spectrum and the Jordan chains of the operator associated with the linear dynamics. Both mechanisms hint for a nonlinear dynamics that may drift along the symmetry-generated family of traveling dipoles without moving away from it.

\section{Introduction}
We consider the two-dimensional Euler equation for an incompressible, inviscid fluid, 
in vorticity formulation 
\begin{equation} \label{Euler_equations}
\partial_{t} \omega + \mathrm{div} (u[\omega] \omega) = 0, \qquad t \geq 0, \, \, x=(x_1, x_2) \in \mathbb{R}^2,
\end{equation}
where $\diver (v_1,v_2) := \partial_{x_1}v_1 + \partial_{x_2}v_2$.
The velocity $u[\omega]$ of the fluid is determined by the usual Biot-Savart Fourier multiplier 
as follows
\begin{equation} \label{BiotSavart}
u[\omega] := \mathcal{F}^{-1}\left[-  \frac{\im\xi^{\perp}}{|\xi|^2} \mathcal{F}[\omega](\xi)\right],\quad \xi^\perp := (-\xi_2, \xi_1),
\end{equation}
where $\mathcal{F}$ denotes the Fourier transform  $\mathcal{F}[f] := \int_{\R^2}f(x)  e^{\im x \cdot \xi}  {\rm d}x $.

We are interested in the stability of \eqref{Euler_equations} near traveling solutions, namely profiles that evolve in time by pure translation and without changing shape. We thus introduce the traveling frame
\begin{equation*}
\begin{cases}
     x' = x + \mathbf{c} t, \quad \mathbf{c} \in \mathbb{R}^{2}, \\
     t' = t.
\end{cases}
\end{equation*}
In these variables, equation \eqref{Euler_equations} becomes
\begin{equation} \label{Euler_movingframe}
   \partial_{t} \omega = - \mathrm{div} ( (u[\omega] + \mathbf{c}) \omega)\, .
\end{equation}
It is a classical result of Yudovich \cite{Yudovich} that, for initial data
$\omega_0 \in L^1(\mathbb R^2)\cap L^\infty(\mathbb R^2)$, the Cauchy problem
for \eqref{Euler_equations} (equivalently for \eqref{Euler_movingframe}) admits a unique global solution in the same class. 
The existence argument can be extended to initial data in the broader $\omega_0 \in L^1(\mathbb R^2)\cap L^p(\mathbb R^2) $, $p>2$.

A well-known steady solution of \eqref{Euler_movingframe} is the Lamb-Chaplygin dipole, given by --in polar coordinates-- 
\begin{equation}\label{Lambdipole}
    \omega_{eq}(r,\theta) = \begin{cases}
        A J_{1}(kr) \sin(\theta) \quad {\rm{in}} \ {\rm D}, \\
        0 \quad {\rm{in}} \  \mathbb{R}^{2} \setminus {\rm D},
    \end{cases} \quad {\bf c}_{eq} := (1, 0),
\end{equation}
where ${\rm D}:=\{ x \in \R^2 : |x| \leq 1\}$ is the unit disk and $A := \frac{2 k}{J_{0}(k)}$. Here $J_i=J_i(r)$, $i\in\{0,1,\dots\}$, are the Bessel functions of the first kind and $k=3.8317\dots$ is the first nontrivial zero of $J_1$. We observe that, inside the unit disk $\rm D$, one has the functional relation
\begin{equation}
    \label{functionalrelation}
\omega_{eq} = - k^2 \psi_{eq},
\end{equation}
where $\psi_{eq}$ is the stream function of the Lamb-Chaplygin dipole, determined (up to constants) by the relation 
\begin{equation}\label{velocity}
    \nabla^\perp \psi_{eq} = u_{eq},\quad u_{eq}:= u[\omega_{eq}] +{\bf c}_{eq},
\end{equation}
with $u_{eq}$ being the  physical velocity field of the dipole. In formulas
\begin{equation}
    \psi_{eq}(r,\theta) = \begin{cases}
        -\frac{A}{k^2} J_{1}(kr) \sin(\theta) \quad r \leq 1 \\
        (\frac{1}{r}-r ) \sin(\theta) \quad r > 1.
    \end{cases}
\end{equation}
\begin{figure}
    \centering    \includegraphics[width=0.7\linewidth]{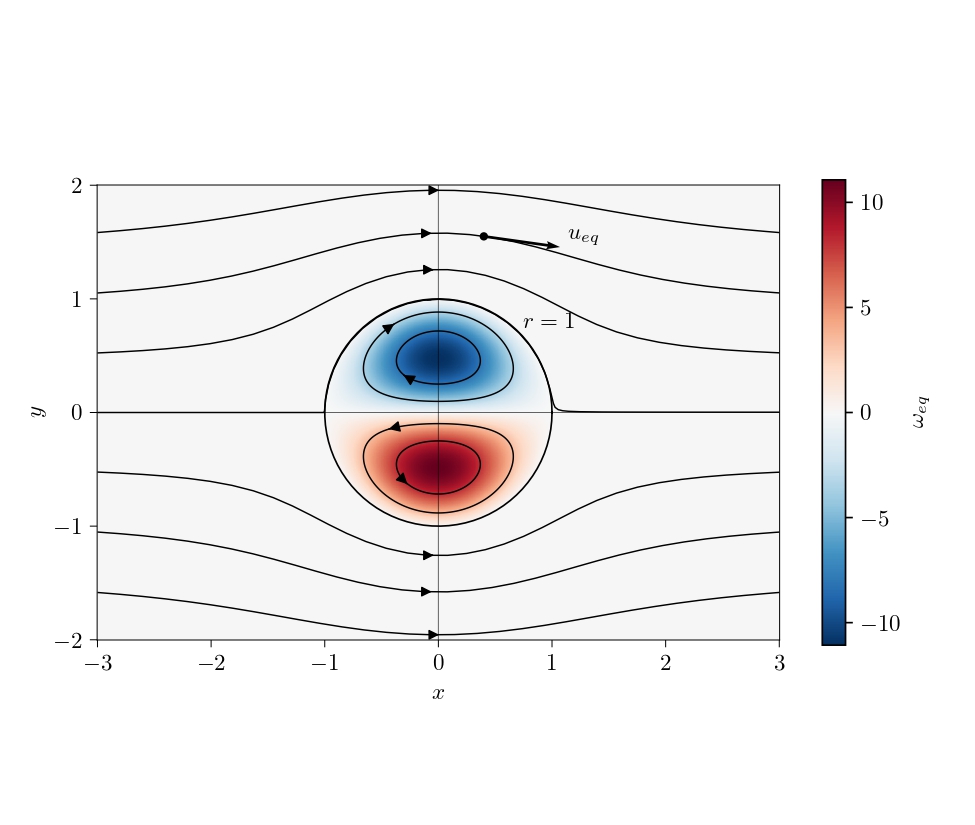}
          \caption{Streamlines of the Lamb-Chaplygin dipole shown in black. The red and blue regions represent the positive and negative vorticity intensities, respectively.}
    \label{fig:Lambdipole}
\end{figure}
The Lamb--Chaplygin dipole introduced in \eqref{Lambdipole} traces back to
the classical works of Chaplygin \cite{Chaplygin1903} and Lamb
\cite{Lamb1906}; see also \cite{MeleshkoVanHeijst1994} for a historical account. More recently, it has attracted renewed attention by the fluid community.
On one hand, its orbital stability was proved by Abe and Choi
\cite{AbeChoi2022} under symmetry, sign and moment assumptions, and later refined
in \cite{AbeChoiJeong2025,LiSongZhou2026}. On the other hand, it has gained a primary role
as building block in the sharp convex-integration construction of
Bru\'e, Colombo, and Kumar \cite{BrueColomboKumar2024}.

A natural question is whether the Lamb-Chaplygin dipole is stable under
arbitrary perturbations in the class $w_0 \in L^1 \cap L^p$, $p>2$, with the distance between the perturbed
solution and the steady state measured in the $L^2$ norm. The present paper takes a step toward answering this question, which, to the best of our knowledge, remains open in such a general setting.

As usual, our starting point in the stability analysis of system \eqref{Euler_movingframe} is the study of the linear evolution of the initial datum $\omega_{eq} + \epsilon w_0$, with $\epsilon$ sufficiently small to discard the terms of order $O(\epsilon^2)$. 
By linearizing \eqref{Euler_movingframe} at its steady solution $ (\omega_{eq}, {\bf c}_{eq})$ one obtains the linear system

\begin{equation}\label{cLeq}
 \begin{cases}
        \partial_t w(t) = \cL_{eq} w(t), \\
        w(0) = w_0  
        ,
    \end{cases}\quad    \cL_{eq} w := - \diver( u_{eq} w) - \diver(u[w] \omega_{eq}),
\end{equation}
with $\omega_{eq}$ in \eqref{Lambdipole} and $u_{eq}$ in \eqref{velocity}.

Our first main result is the following
\begin{thm}\label{lineardyn_Thm} 
    Let $w_0 \in L^1(\R^2)\cap L^p(\R^2)$, $p>2$, and  $\mu := |{\rm D}|^{-1}\int_{\rm D} w_0$ be the average of $w_0$ on the unit disk.  The solution $w\in C^0\big(\R, L^1(\R^2) \cap L^p(\R^2)\big) $ of the linear problem \eqref{cLeq} 
    grows at most quadratically in time: there exist $C>c>0$, independent from $w_0$, such that
    \begin{equation} \label{quadraticgrowth}
         c |\mu| t^2  - C \| w_0\|_{L^2} t \leq \| w(t) \|_{L^2} \leq C \big( \| w_0\|_{L^2} +   \| w_0\|_{L^2} t +  |\mu| t^2 \big) ,\quad \text{ for every }t>0 .
    \end{equation}
Moreover there exist initial data $w_0$,
with $\mu = 0$, for which $\|w(t)\|_{L^2}$ grows linearly in time.
\end{thm}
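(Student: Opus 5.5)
The strategy is to exploit the Hamiltonian structure of $\cL_{eq}$ and its conserved quantities to split the dynamics into a finite-dimensional part, built from the symmetry-generated directions, and a remainder on which an energy-type quadratic form is coercive.

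\textbf{Step 1: $\cL_{eq}=J H$ and the bounded part.} Write $\cL_{eq}w=-\diver(u_{eq}w)-\diver(u[w]\omega_{eq})$ as $\cL_{eq}=J H$. Here $J w=-\diver(\omega_{eq}\nabla^\perp\cdot)$, or more precisely $Jf=\{\omega_{eq}, f\}$-type on the support plus the transport part. On ${\rm D}$ the functional relation \eqref{functionalrelation} gives a quadratic form
\begin{equation*}
E(w)=\tfrac12\int_{\R^2\setminus{\rm D}} \frac{|w|^2}{|\nabla\psi_{eq}|}\,(\dots)\;+\;\tfrac1{2k^2}\int_{\rm D}|w|^2-\tfrac12\int w\,(-\Delta)^{-1}w .
\end{equation*}
This form is conserved by the linear flow. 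I would first check conservation of $E$, of the linearized impulse $\int x_2 w$, of the circulation $\int w$, and of Casimir-type quantities. For the circulation on the core, the linear flow transports $w$ along the streamlines of $u_{eq}$, which are closed inside ${\rm D}$, so the mass $\int_{\rm D} w$ (i.e.\ $\mu$) is conserved.

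\textbf{Step 2: the generalized kernel.} The symmetries generate explicit elements. Translations give $\partial_{x_1}\omega_{eq}$ and $\partial_{x_2}\omega_{eq}\in\ker\cL_{eq}$. Scaling and Galilean boosts of the speed give Jordan partners $v$ with $\cL_{eq}v=\partial_{x_1}\omega_{eq}$, for instance the derivative of the dipole family with respect to speed. A constant vorticity on the disk, $\mathbbm 1_{\rm D}$, should produce a further Jordan chain of length three: $\cL_{eq}\mathbbm 1_{\rm D}$ lies in the span of a length-two chain. Assuming the classification of the spectrum and Jordan chains stated in the abstract, I would decompose
\begin{equation*}
w_0=\mu\,\mathbbm 1_{\rm D}+a\,v+b_1\partial_1\omega_{eq}+b_2\partial_2\omega_{eq}+\dots+w_0^\perp ,
\end{equation*}
with $w_0^\perp$ symplectically orthogonal to the generalized kernel. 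On the finite-dimensional part the flow is $e^{tN}$ with $N$ nilpotent. This gives the $\mu t^2$ term, with a genuinely nonzero coefficient from the chain of length three, and linear terms $\|w_0\|_{L^2}t$ from chains of length two. The lower bound follows because the top coefficient $\mu t^2\,\cL_{eq}^2\mathbbm 1_{\rm D}/2$ is a fixed nonzero vector in $L^2$ and the remaining terms are $O(\|w_0\|(1+t))$. The example with linear growth is $w_0=v$, the speed-derivative direction, since $w(t)=v+t\,\partial_1\omega_{eq}$.

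\textbf{Step 3: uniform bound on the complement.} This is the main obstacle. I need to show that $E$ restricted to the symplectic complement of the generalized kernel is coercive in $L^2$, i.e.\ $E(w)\gtrsim\|w\|_{L^2}^2$. Boundedness of $e^{t\cL_{eq}}w_0^\perp$ would then follow from conservation of $E$ and invariance of the complement.

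The exterior part is pure transport along open streamlines, and is handled by the weighted term or directly, since it is bounded by transport. The interior part reduces to the variational problem of the Lamb dipole: minimize $\int_{\rm D}|\nabla\psi|^2$ against $k^2\int_{\rm D}\psi^2$. Here $k$ is the first zero of $J_1$, so the only nonpositive directions are the modes $J_1(kr)\sin\theta$, $J_1(kr)\cos\theta$ and constants. I would prove coercivity by a Bessel-mode expansion in $\theta$. Each angular mode $n$ gives a Sturm–Liouville problem whose lowest eigenvalue exceeds $k^2$ for $n\ge2$; the case $n=0$ and $n=1$ leave exactly the finite-dimensional degeneracies removed by the orthogonality conditions.

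Combining the three steps yields \eqref{quadraticgrowth}. Well-posedness in $L^1\cap L^p$ comes from the Yudovich-type theory, since $u_{eq}$ is Lipschitz away from $\partial{\rm D}$ and log-Lipschitz overall.
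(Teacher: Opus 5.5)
Your Steps 1--2 agree with the paper's bookkeeping. $\cL_{eq}\mathbf 1_{\rm D}=-\tfrac12 f_1^{\rm ev}$ and $\cL_{eq}f_1^{\rm ev}=-f_0^{\rm ev}$ produce exactly the term $-\tfrac{\mu}{2}tf_1^{\rm ev}+\tfrac{\mu}{4}t^2f_0^{\rm ev}$ in \eqref{Duhamelsolves}, and $e^{t\sL}f_1^{\rm odd}=f_1^{\rm odd}+tf_0^{\rm odd}$ is the linear-growth example. The genuine gap is Step 3. Both sides of \eqref{quadraticgrowth} rest on the bound $\|e^{t\sL}\|\le C(1+t)$, and the energy argument you propose to obtain it does not work here.

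\textbf{Step 3 fails.} First, your mode analysis of the energy is wrong. By \eqref{actionH}, $\cH$ acts on $J_m(j_{m-1,n}r)e^{\im m\theta}$ by $1-k^2/j_{m-1,n}^2$. Since $k=j_{1,1}$, $\cH$ has a three-dimensional kernel ${\rm K}={\rm span}\{J_0(kr),J_2(kr)\cos2\theta,J_2(kr)\sin2\theta\}$. So the mode $n=2$ is not strictly coercive, and the $n=0$ degeneracy $J_0(kr)$ is a null vector, not something an orthogonality condition can remove. Indeed, no condition built from the energy pairing can remove ${\rm K}$, since $\langle\cH f,h\rangle=0$ for every $h\in{\rm K}$. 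Nor can ${\rm K}$ be discarded dynamically, because it contains $f_0^{\rm odd},f_0^{\rm ev}\in{\rm Ran}(\sL)$.

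Moreover, $\ker\sL=\{f:\cH f\in\ker\cJ\}$ is infinite-dimensional, because $\ker\cJ$ contains every zero-average function of $\psi_{eq}$ and $\cH$ is Fredholm. Hence your splitting ``finite nilpotent block $\oplus$ invariant complement'' has no spectral projection behind it. The standard invariant complement, the annihilator of the Jordan chains of $\sL^*=-\cH\cJ$, is also unavailable. Since $f_1^{\rm odd}=\omega_{eq}\in\ker\cJ$, there is no $g$ with $\sL^*g=\cH f_1^{\rm odd}$ (Remark \ref{rmk:dualJordanchain}). Consequently, conservation of $\langle\cH w,w\rangle$ controls the trajectory only modulo ${\rm K}$, and your argument gives no control of the ${\rm K}$-component.

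\textbf{What the paper does instead.} It abandons coercivity on an invariant complement.
\begin{enumerate}
\item It works on $Z\supseteq{\rm Ran}(\sL)$. There $\cH\ge0$ with null vectors exactly ${\rm K}$ (Lemma \ref{lemZ}), and $\cH$ is coercive on $Z\cap{\rm K}^\perp$.
\item It proves $\overline{{\rm Ran}(\cJ)}\cap{\rm K}={\rm span}\{f_0^{\rm odd},f_0^{\rm ev}\}$, by showing that the streamline average of $J_0(kr)$ is nonzero at the elliptic point (Lemma \ref{toSpain}). From this it deduces $\sL(\tilde Z)\cap{\rm K}=\{0\}$ (Lemma \ref{banal_intersection}).
\item A compactness/contradiction argument then shows that $e^{t\sL}f$ stays bounded, but only for $f\in{\rm Ran}(\sL)$ (Lemma \ref{thanksGodforthislemma}). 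A blowing-up normalized trajectory would converge strongly to a unit vector of ${\rm K}$ that is a limit of $\sL g_n$ with $g_n\to0$, contradicting closedness of $\sL$.
\item The linear bound for arbitrary data follows by telescoping, $e^{n\sL}f-f=\sum_{j<n}e^{j\sL}(e^{\sL}f-f)$ with $e^{\sL}f-f\in{\rm Ran}(\sL)$, together with the uniform boundedness principle.
\end{enumerate}
Some mechanism of this kind, controlling the ${\rm K}$-direction without an invariant coercive complement, is what your proposal is missing.
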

We emphasize that such a precise polynomial-growth estimate is remarkable in its own right. For an arbitrary generator $\cL$, eigenvalues with positive real part provide an immediate mechanism for exponential growth of the semigroup $e^{t\cL}$. However, the absence of such eigenvalues, or even more detailed spectral information, is in general not sufficient to control the semigroup evolution:
 Renardy's PDE counterexample shows that, even for a hyperbolic equation, linear stability cannot in general be inferred from the spectrum alone \cite{Renardy1994}. The separation between spectral information and semigroup growth is already apparent in the celebrated Gearhart--Pr\"uss theorem on Hilbert spaces \cite{Gearhart1978,Pruss1984} (see \cite{DellOroSeifert2022} for a short elementary proof). Their result shows that subexponential growth of the semigroup is equivalent to a uniform resolvent estimate along the imaginary axis, a requirement much stronger than merely locating the spectrum in the closed left half-plane.
Here we conclude much more than a subexponential growth estimate: we provide a  quadratic-in-time bound, whose leading-order $t^2$ contribution is generated only by the circulation of the initial datum. Moreover, the starting functional setting of our analysis is not Hilbertian. As observed in \cite{Davies2005}, the linear evolution of a semigroup may behave very differently in $L^1$ and in $L^2$, even when the spectrum of the generator is unchanged.

The derivation of estimate \eqref{quadraticgrowth} relies on a precise description of the linearized dynamics. 
Since the vorticity $\omega_{eq}$ of the Lamb-Chaplygin dipole is supported on the unit disk, the linear evolution in \eqref{cLeq} separates neatly into two parts.
Outside the unit disk the perturbation is simply transported by the dipole
flow. Inside the unit disk ${\rm D}$, the initial datum evolves according to a forced linear system:
its homogeneous part is governed by a closed operator
$\sL : {\rm Dom}(\sL)\subseteq L^2_0({\rm D}) \to L^2_0({\rm D})$,
while the forcing term is proportional to the circulation of $w_0$ in ${\rm D}$.  Here $L^2_0({\rm D})$ is the Hilbert space of square-integrable functions on  ${\rm D}$ with zero average and
\begin{equation} \label{Linear_operator}
    \mathscr{L} := - (u_{eq} \cdot \nabla) ({\rm Id} + k^2 \Delta_{D\! N}^{-1}),
\end{equation}
where $\Delta_{D\! N}^{-1}$ is the inverse Laplace operator with suitable boundary conditions on $\partial {\rm D}$ descending from the analysis of the evolution outside the disk, see Section \ref{passingtosL}. We will be able to convert 
the spectral properties
of the operator $\sL$ in \eqref{Linear_operator}, including its Jordan chains, into quantitative information on the dynamics of \eqref{cLeq}. The next theorem provides a complete description of this spectral structure.

Before presenting the statement of the theorem, let us recall that a Jordan chain of length $m\geq 2$ associated with an eigenvalue $\lambda$ of $\sL$ is a set of the form
\begin{equation}
    \label{Jordanchain}
\{ f_{m-1}, f_{m-2}, \dots, f_0 \} \subset {\rm Dom}(\sL),
\end{equation}
with $\sL f_0 = \lambda f_0 $, $f_0 \neq 0$, and $\sL f_{j} = \lambda f_j + f_{j-1} $ for every $j=1,\dots,m-1$. We also recall that, if $\Re(\lambda) \geq 0$, such a structure entails a  growth mechanism for the linear system $\dot w = \sL w$. For example, if $\{ f_1, f_0\}$ is a Jordan chain of length $2$ associated with $\lambda$, then $w(t) = e^{t \lambda}( f_1 + t f_0)$ is a growing solution with initial datum $w(0) = f_1 $.

Our second main result is the following 
\begin{thm} \label{Spectralstability_Thm} Let $\sL$ be the operator introduced in  \eqref{Linear_operator}. Then:
\begin{enumerate}
    \item \label{primopunto} $\sL$ is spectrally stable, namely
$
\sigma_{L^2_0({\rm D})}(\sL) \subseteq \im \R
$, and each of its nonzero eigenvalues is semisimple.
\item  \label{secondopunto} $\sL$ is linearly unstable. It has exactly two (up to linear combinations) Jordan chains of length $2$ associated with $0$.
\end{enumerate}
\end{thm}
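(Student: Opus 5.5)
The plan is to use the Hamiltonian factorization of $\sL$. Write $\sL = J K$ with $J := -(u_{eq}\cdot\nabla)$ and $K := {\rm Id} + k^2\Delta_{DN}^{-1}$. Since $\diver u_{eq}=0$ and $u_{eq}\cdot n=0$ on $\partial{\rm D}$ (the boundary $r=1$ is a streamline, where $\psi_{eq}=0$), $J$ is skew-adjoint on $L^2_0({\rm D})$. The operator $K$ is bounded and self-adjoint, and it is a compact perturbation of the identity because $\Delta_{DN}^{-1}$ is compact. I would first diagonalize $K$ using the eigenfunctions of $-\Delta_{DN}$ on the disk; these are Bessel modes $J_n(j r)e^{\im n\theta}$, with the boundary condition inherited from Section \ref{passingtosL}. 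From this I would show that $K$ has only finitely many nonpositive directions. Its kernel consists of $-\Delta_{DN}$-eigenfunctions with eigenvalue $k^2$, which should include $\omega_{eq}$ itself together with the rotated dipole $A J_1(kr)\cos\theta$. I also expect a finite negative space of dimension $n^-(K)$.

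\textbf{Step 1: spectral stability.} I would take an eigenpair $\sL f=\lambda f$ with $\Re\lambda\neq 0$. Applying $K$ and pairing with $f$ gives $\lambda\langle Kf,f\rangle=\langle KJKf,f\rangle$. The right-hand side is purely imaginary, because $KJK$ is skew-adjoint. Hence $\langle Kf,f\rangle=0$ whenever $\Re\lambda\neq0$. The standard Pontryagin-space/Krein counting then gives $k_r+2k_c+2k_i^-+k_0^{\le0}=n^-(K)$ (Kapitonov/Haragus--Kapitula type). So I would need to show that the negative directions of $K$ are entirely used up by the zero eigenvalue's Jordan structure; that is, the negative Krein index contributed by the generalized kernel must equal $n^-(K)$. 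For the continuous spectrum, $J$ has purely continuous spectrum on $\im\R$: transport along closed streamlines, with the degenerate stagnation points handled separately. Since $K$ is a compact perturbation of the identity, the essential spectrum of $\sL=J+Jk^2\Delta^{-1}_{DN}$ stays on $\im\R$, once I verify that $J\Delta^{-1}_{DN}$ is relatively compact. Semisimplicity of the nonzero eigenvalues follows the same way. A Jordan chain at $\lambda\in\im\R\setminus\{0\}$ forces $\langle Kf_0,f_0\rangle=0$, and the index count then excludes it.

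\textbf{Step 2: kernel and Jordan chains at $0$.} The kernel of $\sL$ consists of $\ker K$ plus the functions $f$ with $Kf$ constant along streamlines, i.e. $Kf=F(\psi_{eq})$. I would also use the symmetries: $x_2$-translation gives $\partial_{x_2}\omega_{eq}$, $x_1$-translation gives $\partial_{x_1}\omega_{eq}$, and the scaling/speed family of dipoles gives $\partial_c\omega_{c}$. Differentiating the traveling family in speed yields $\sL(\partial_c\omega)=\partial_{x_1}\omega_{eq}$, which is the first chain of length 2. The second chain should come from the rotation/tilt direction: rotating the dipole changes the direction of ${\bf c}$, so a rotation generator $f_1$ satisfies $\sL f_1=\partial_{x_2}\omega_{eq}$ (up to normalization). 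To show there are exactly two chains and that they have length exactly 2, I would solve $\sL g=f_1$ via the Fredholm alternative. Solvability requires $f_1\perp\ker\sL^*=K^{-1}$-images of $\ker J$-type elements, and I would show that this fails by computing a nonzero pairing such as $\langle Kf_1,f_1\rangle\neq0$. This pairing is explicitly computable with Bessel identities and gives the sign needed for the Krein count in Step 1.

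\textbf{Main obstacle.} The hardest part will be the exact identification of $\ker\sL$, i.e. showing there are no extra kernel elements beyond the symmetry-generated ones, together with the precise value of $n^-(K)$ in the DN setting. This requires a careful Bessel-mode analysis of the boundary condition and an eigenvalue comparison: the relevant $-\Delta_{DN}$ eigenvalues below $k^2$ must be counted mode by mode in $n$. It also requires handling $Kf=F(\psi_{eq})$ for general $F$, which involves the degenerate level sets near the separatrix $r=1$ and the elliptic points. I would first try decomposing in Fourier modes in $\theta$ and using the Sturm--Liouville structure in $r$ to count the sign changes.
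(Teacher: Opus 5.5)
\textbf{Part (1) and chain length.} Your route for part (1) is legitimate but differs from the paper's. You apply a Krein/Lin--Zeng index count directly on $L^2_0({\rm D})$. Its hypotheses hold here: $n^-(\cH)=2$, $\dim\ker\cH<\infty$, and $\cH$ is coercive on its positive spectral subspace, since its eigenvalues are $1-k^2/j^2$. The two $\cH$-orthogonal negative directions $f_1^{\rm odd}=\omega_{eq}$ and $f_1^{\rm ev}=\partial_\theta\omega_{eq}$ in the generalized kernel then saturate the count. The paper reaches the same conclusion by a lighter, self-contained argument. It notes that ${\rm Ran}(\sL)$ lies in the $\cH$-orthogonal complement $Z$ of these two vectors, that $\cH\ge0$ on $Z$, and that an isotropic vector of $Z$ lies in $\ker\cH$.

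However, your guess for $\ker K$ is wrong, and it matters. The relation $\omega_{eq}=-k^2\psi_{eq}$ involves the total stream function, which contains the drift term $-x_2$. Hence $\Delta_{D\!N}^{-1}\omega_{eq}=-k^{-2}\omega_{eq}+x_2$, so $\cH\omega_{eq}=k^2x_2$ and $\langle\cH\omega_{eq},\omega_{eq}\rangle=-2\pi k^2$. Equivalently, the Robin condition in the mode $e^{\pm\im\theta}$ selects the zeros $j_{0,n}$, not $k=j_{1,1}$. In fact $\ker\cH={\rm span}\{J_0(kr),J_2(kr)\cos2\theta,J_2(kr)\sin2\theta\}$, which contains the chain bottoms $\partial_{x_1}\omega_{eq}$ and $\partial_{x_2}\omega_{eq}$. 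You need exactly this for your non-extension step: $\cH f_1\in\ker\sL^*$ because $\sL^*\cH f_1=-\cH\sL f_1=0$, which uses $\sL f_1\in\ker\cH$. The Fredholm framing is also inaccurate, since $\sL$ is not Fredholm: $\ker\cJ$ is infinite-dimensional and ${\rm Ran}(\cJ)$ is not closed. You only use the necessary condition, though, so this point is harmless.

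\textbf{The gap: ``exactly two chains''.} The obstacle you name, showing that $\ker\sL$ contains nothing beyond the symmetry-generated elements, is false. Indeed $\ker\sL=\{f:\cH f\in\ker\cJ\}$ is infinite-dimensional, because every mean-zero streamline function orthogonal to $\ker\cH$ has an $\cH$-preimage. What must be proved is ${\rm Ran}(\sL)\cap\ker\sL={\rm span}\{f_0^{\rm odd},f_0^{\rm ev}\}$, and this takes two steps.
\begin{enumerate}
\item A chain bottom $f_0=\sL f_1$ satisfies $\langle\cH f_0,f_0\rangle=0$ and is $\cH$-orthogonal to $f_1^{\rm odd}$ and $f_1^{\rm ev}$. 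Semi-definiteness of $\cH$ on that complement, which is where $n^-(\cH)=2$ enters, then forces $\cH f_0=0$.
\item One must exclude the \emph{third} direction of $\ker\cH$. Since $f_0^{\rm odd},f_0^{\rm ev}\in{\rm Ran}(\cJ)$, this amounts to showing $J_0(kr)\notin\overline{{\rm Ran}(\cJ)}=(\ker\cJ)^\perp$.
\end{enumerate}
Your plan does not contain the second step, and no Krein count can supply it, because $J_0(kr)\in\ker\cH$ is invisible to the quadratic form. The paper handles it in Lemma~\ref{toSpain} by projecting onto $\ker\cJ$, that is, averaging along streamlines. At the elliptic point $(z_*/k,\pi/2)$, where $J_1'(z_*)=0$, this average equals $J_0(z_*)=J_1(z_*)/z_*\neq0$, so the projection does not vanish.
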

Let us comment on Theorems \ref{lineardyn_Thm}-\ref{Spectralstability_Thm}.
\begin{enumerate}[leftmargin=0pt,
    labelsep=0.5em,
    itemindent=2em]
    \item We are able to characterize completely the linear dynamics near the Lamb-Chaplygin dipole, without restricting ourselves to any particular symmetry class of initial data as done in prior literature. The estimate that we show in \eqref{quadraticgrowth} is sharp, in the sense that the only sources of growth for a general initial datum $w_0$ under the linear system \eqref{cLeq} are:
    \begin{itemize}
        \item a nonzero average $\mu \neq 0$ on the disk ${\rm D}$, which gives rise to a quadratic-in-time detachment from the original equilibrium;
        \item a nontrivial component along one of the two Jordan chains in the generalized kernel of $\sL$, yielding a linear-in-time drift.
    \end{itemize} 
    The proof we present shows as by-product that, without one of the above elements, the evolution $e^{t\cL_{eq}} w_0$ remains bounded in time, cfr.\ Lemma \ref{thanksGodforthislemma}. 
    \item The mild linear instability that we obtain is a direct consequence of the symmetries preserved by Euler equations. Indeed, as we will show in Section \ref{symmetries}, the Lamb-Chaplygin dipole appears inside a continuum of traveling solutions. The linear growth reflects the possibility for an initial datum to evolve by drifting along this ensemble of steady states. In particular, as we prove in Lemma \ref{generalizedkernel}, the unstable generalized eigenvectors correspond to infinitesimal  rotations/amplifications of the Lamb dipole and generate spatial translations of the solution. The latter will then move away from the original steady state at constant speed, while keeping invariant distance with respect to the manifold of Lamb dipoles. On the other hand, as we show in the end of Section \ref{symmetries}, a nontrivial circulation on the unit disk gives rise, under the evolution of the full Euler equations, to an explicit rototranslation of the Lamb dipole. The linearized dynamics retains a trace of this behavior, as reflected in the quadratic-in-time growth displayed in \eqref{quadraticgrowth}.
    \item Our linear analysis 
fits naturally with the previous stability results for the Lamb--Chaplygin
dipole by clarifying the linear mechanisms underlying them. In particular, the
odd-symmetry assumption in \cite{AbeChoi2022,AbeChoiJeong2025} eliminates both
the average over the unit disk and the generalized unstable direction associated with infinitesimal rotations, namely  the obstructions whose absence underlies the
orbital stability mechanism modulo horizontal shifts developed in those works.\\
The spectral-stability result recently appeared in
\cite{LiSongZhou2026}, and comparable with point \ref{primopunto} of Theorem \ref{Spectralstability_Thm}, moves in a similar direction.
 In that paper, the
authors study the spectrum of the linearized operator $\cL_{eq}$ in \eqref{cLeq} directly and
prove the absence of unstable eigenvalues by relying on the general index theory
for Hamiltonian PDEs developed by Lin and Zeng \cite{LinZeng2022}. 
This powerful
framework, designed for a broad class of Hamiltonian problems, is implemented after realizing the operator on suitable weighted
subspaces of $L^2(\R^2)$, which provide the additional decay needed to handle
the Biot--Savart operator on the whole plane; see also
\cite[Remark B.4.1]{afterVishik}.\\
We also mention the recent work of Wang \cite{Wang2025Disk}, where a truncated
version of the Lamb--Chaplygin dipole is studied as a steady solution of the
Euler equation in a fixed disk, and orbital stability is proved by variational
methods.
in Wang's disk setting, the truncation fixes the vortical region and removes the 
modulation mechanism associated with changes of the amplitude of the 
full Lamb--Chaplygin family.
\end{enumerate}

Let us outline the strategy of our proof. 

We begin by obtaining, in Section \ref{symmetries}, the generalized eigenvectors of the operator $\cL_{eq}$ in \eqref{cLeq} associated with $0$ by differentiating the symmetries of system \eqref{Euler_movingframe}. This is the standard mechanism
by which continuous families of equilibria generate neutral directions for the linearized operator. In the present traveling-wave setting, however, some symmetries also change the traveling velocity of the Lamb--Chaplygin dipole; differentiating along these parameters therefore produces generalized eigenvectors, rather than genuine kernel elements. This is exactly the strategy followed in \cite{NguyenStrauss2023} to obtain, starting from the underlying symmetries of the water waves system, the Jordan chains associated with $0$ for Stokes waves that play an essential role also in the proof of the long-wave spectral picture in \cite{BertiMasperoVentura2022,
BertiMasperoVentura2023}.

The viewpoint that we adopt in Sections \ref{sec:cuore} is dynamical rather than
purely spectral, in contrast with \cite{LiSongZhou2026}. We do not treat $\cL_{eq}$ as an operator to be analyzed directly
through its spectrum, since, as observed by the authors of the latter paper, in the natural $L^2$-setting this operator is not
closed. Instead, we interpret $\cL_{eq}$ as the generator of a strongly continuous
semigroup and consider the linear evolution associated with
arbitrary initial data in the vorticity class
$L^1(\R^2)\cap L^p(\R^2)$, $p>2$. Exploiting the compact support of the
vorticity of the Lamb--Chaplygin dipole, we reduce the relevant part of the
dynamics to a suitable $L^2$-closed operator $\sL$ on the unit disk. This approach avoids
the use of weighted $L^2$ spaces and, at the same time, 
characterizes completely the sources of growth in the linear evolution, which are shown to
arise solely from the circulation of the initial datum and from the Jordan chains found in Section \ref{symmetries}.

The rest of our proof, presented in Section \ref{sec:Hamiltonian}, relies entirely on the Hamiltonianity of the operator $\sL= \cJ \cH$, cfr.\ \eqref{Hamiltonian}, which is itself a remnant of the
Hamiltonian structure of the two-dimensional Euler equations, see \cite{Olver1982}. This point of
view is in line with the previous stability analyses in \cite{AbeChoi2022,AbeChoiJeong2025,LiSongZhou2026}, 
and has been fully exploited in \cite{InstabilityCellularFlow} to study the stability of Taylor-Green vortices. The key point is that the  generalized eigenvectors presented in Section \ref{symmetries} already
exhaust the maximal number of generalized unstable directions allowed by the Hamiltonian structure. This rules out both eigenvalues with positive real part
and nontrivial Jordan chains associated with nonzero imaginary eigenvalues. The only substantial computation in the proof of Theorem \ref{Spectralstability_Thm} is the verification of this last point, carried
out in Lemmata \ref{stability_condition} and \ref{toSpain}. 

Section \ref{sec:linearstability} combines the spectral information obtained in the proof of Theorem \ref{Spectralstability_Thm} with the dynamical argument needed to prove Theorem \ref{lineardyn_Thm}. The main difficulty is that the standard Hamiltonian energy argument cannot be applied directly. Such an argument would require embedding ${\rm Ran}(\sL)$ in an $\sL$-invariant  closed subspace $Y$ on which $\cH$ is coercive.
One can define $Y$ in a standard way as the $L^2$-orthogonal to the Jordan chains of $\sL^*=-\cH \cJ$ constructed as the image through $\cH$ of those of $\sL$. In our problem, one readily finds {\it one} Jordan chain for $\sL^*$, in contrast with the {\it two} obtained for $\sL$ in Section \ref{symmetries}, see Remark \ref{rmk:dualJordanchain}. This is a usual obstruction in the non-normal infinite-dimensional setting, where the Jordan structure of an operator need not mirror that of its adjoint. Consequently, $\cH$ is not coercive on $Y$ and the standard stability argument does not apply.

Nevertheless, we will be able to show that $\cH$ is positive definite on the non-closed subspace ${\rm Ran}(\sL^2)$. This is still sufficient to conclude that the evolution of every initial datum in ${\rm Ran}(\sL)$ remains bounded in time. As a consequence, a general initial datum in $L^2_0({\rm D})$ can grow at most linearly in time. This completes the proof of Theorem \ref{lineardyn_Thm}.
\smallskip

We now pass to the analysis of the symmetries of
\eqref{Euler_movingframe} and of the Jordan chains generated by them.

\section{The family of Lamb-Chaplygin dipoles} \label{symmetries}
The  Lamb-Chaplygin dipole presented in \eqref{Lambdipole} is not an isolated traveling solution of \eqref{Euler_equations}, since it belongs to a five-parameter family of steady states of \eqref{Euler_movingframe} arising from the symmetries of the system.  In this section we present the construction of this family of steady solutions of \eqref{Euler_movingframe} and  its aftermaths on the linear operator $\cL_{eq}$ in \eqref{cLeq}.

{Let us define the vector field $F(\omega, \mathbf{c}) := - \mathrm{div} ( (u[\omega] + \mathbf{c}) \omega)\,$, corresponding to the right hand side of \eqref{Euler_movingframe}. As one can readily see by inspection, the function $F$ in \eqref{Euler_movingframe} is invariant under the following transformations}, for every $\alpha \in \R$:
    \begin{enumerate}[label=(\roman*)]
        \item \label{traslainx1} {\bf horizontal translation}: $F( \omega(x_1+\alpha,x_2), \mathbf{c}) =  F(\omega, \mathbf{c})(x_1+\alpha,x_2)$;
        \item {\bf vertical translation}: $F(\omega(x_1,x_2+\alpha), \mathbf{c}) =  F(\omega, \mathbf{c})(x_1,x_2+\alpha)$;
        \item\label{rotation} {\bf rotation}: $F(\omega( R_{\alpha} x), R^{T}_{\alpha} \mathbf{c}) =  F(\omega, \mathbf{c})(R_{\alpha} x)$ for every rotation $R_{\alpha} := \begin{pmatrix} \cos(\alpha) & -\sin(\alpha) \\ \sin(\alpha) &  \cos(\alpha) \end{pmatrix}$;
        \item {\bf amplification}: $F(\alpha \omega( x), \alpha \mathbf{c}) = \alpha^{2} F(\omega, \mathbf{c})(\alpha x)$;
        \item \label{dilation} {\bf dilation}: $F( \omega(\alpha x), \alpha^{-1} \mathbf{c}) =  F(\omega, \mathbf{c})(\alpha x)$, if $\alpha \neq 0$.
    \end{enumerate}
{When applied to the Lamb-Chaplygin dipole, horizontal and vertical translations represent rigid spatial shifts along the $x_1$ and $x_2$ axes, respectively, leaving the internal structure, size, and translation velocity ${\rm {\bf c}}$ unaffected. On the other hand, symmetries \ref{rotation}-\ref{dilation} change the traveling velocity of the dipole. More precisely, a rigid rotation pivots the geometric structure and rotates the direction of propagation of the the dipole by the same angle, whereas the dipole propagates $\alpha$ times faster across the plane under an $\alpha$-amplification of the vorticity. Finally, a spatial dilation as in \ref{dilation} induces an inverse $\alpha^{-1}$ rescaling of the traveling velocity. 

To characterize the entire family of equilibria, we introduce the following notation.} 
Let $\mathcal{G}$ be the group of transformations generated by arbitrary compositions of the operations \ref{traslainx1}-\ref{dilation}. Any element $\tau \in \mathcal{G}$ acting on the steady state $(\omega, \bf c)$ can be uniquely represented by a set of parameters $\bm{\alpha} = (\alpha_1, \dots, \alpha_5)$, $\alpha_5 \neq 0$, such that
\begin{equation}
    \tau_{\bm{\alpha}}\omega(x) := \alpha_4 \, \omega\left( \alpha_5 R_{\alpha_3} x + \begin{pmatrix} \alpha_1 \\ \alpha_2 \end{pmatrix} \right), \qquad
    \gamma_{\bm{\alpha}}{\bf c} := \frac{\alpha_4}{\alpha_5} R_{\alpha_3}^T \mathbf{c}.
\end{equation}
It is straightforward to notice that if $F(\omega, {\bf c}) = 0$, i.e. $(\omega, \bf c)$ is an equilibrium, then $F(\tau_{\bm \alpha} \omega, \gamma_{\bm{\alpha}}{\bf c})=0$. This property naturally establishes a continuous family of equilibria whose elements will simply be referred to as Lamb-Chaplygin dipoles in the sequel.

By linearizing \eqref{Euler_movingframe} at its steady solution $(\tau_{\bm \alpha} \omega_{eq}, \gamma_{{\bm \alpha}}{\bf c}_{eq})$ one obtains the linear system
\begin{equation}\label{linearsystemintro}
 \partial_t w =   \cL_{\bm \alpha} w \,,\quad     \cL_{\bm \alpha} w = - \diver (u_{{\bm \alpha}}  w) - \diver(u[w] \omega_{{\bm \alpha}})\, .
\end{equation}
where $\omega_{\bm \alpha} :=\tau_{\bm \alpha} \omega_{eq}$ and $u_{\bm \alpha} := u[\tau_{\bm \alpha}] + \gamma_{\bm \alpha} {\bf c}$.
It is straightforward to observe that 
\begin{equation}\label{conjugation}
    \cL_{\bm \alpha} := \alpha_4^2\; \tau_{\bm \alpha} \circ \cL_{eq} \circ  \tau_{\bm \alpha}^{-1},
\end{equation}
with $\cL_{eq}$ in \eqref{cLeq}.
In view of \eqref{conjugation} the linear evolution near any Lamb-Chaplygin dipole is completely determined by studying the semigroup generated by $\cL_{eq}$.

{We now present the elements of the generalized kernel of $\cL_{eq}$ associated with symmetries \ref{traslainx1}-\ref{dilation}}.
\begin{lem} \label{generalizedkernel}
The following functions
\begin{equation}\label{f1oddev}
    f_{1}^{\rm odd} := \frac{2k}{J_{0}(k)} J_{1}(kr) \sin(\theta)  {\bf 1}_{{\rm D}}, \quad 
    f_{1}^{\rm ev} := \frac{2k}{J_{0}(k)} J_{1}(kr) \cos(\theta) {\bf 1}_{{\rm D}},
\end{equation}
fulfill 
\begin{equation}\label{actionJordan}
     \cL_{eq} \, f_{1}^{\rm odd} = f_{0}^{\rm odd}, \quad \cL_{eq} \, f_{1}^{\rm ev} = -f_{0}^{\rm ev}, 
\end{equation}
where $f_0^{\rm odd}$ and $f_0^{\rm ev}$ are the following  kernel elements of $\cL_{eq}$
\begin{equation}\label{f0oddev}
    f_{0}^{\rm odd} := J_{2}(kr) \sin(2\theta) {\bf 1}_{{\rm D}}, \quad 
    f_{0}^{\rm ev} := \frac{ k^2}{J_{0}(k)} \big[ J_0(kr) + J_{2}(kr) \cos(2\theta) \big]{\bf 1}_{{\rm D}} \, . 
\end{equation}
\end{lem}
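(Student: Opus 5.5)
The plan is to obtain all four identities by differentiating the symmetry-invariance of $F$ at the equilibrium $(\omega_{eq},{\bf c}_{eq})$, as anticipated in the discussion preceding the lemma. Each relation will then be confirmed by an explicit Bessel computation to fix the constants.

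\emph{Differentiating the invariance.} For every ${\bm\alpha}$ we have $F(\tau_{\bm\alpha}\omega_{eq},\gamma_{\bm\alpha}{\bf c}_{eq})=0$. We differentiate this identity in $\alpha_j$ at ${\bm\alpha}=(0,0,0,1,1)$. Since $u[\cdot]$ is linear, the chain rule gives
\begin{equation*}
\cL_{eq}\big(\partial_{\alpha_j}\tau_{\bm\alpha}\omega_{eq}\big) - \diver\big((\partial_{\alpha_j}\gamma_{\bm\alpha}{\bf c}_{eq})\,\omega_{eq}\big)=0 ,
\end{equation*}
that is, $\cL_{eq}(\partial_{\alpha_j}\tau\omega_{eq}) = (\partial_{\alpha_j}\gamma{\bf c}_{eq})\cdot\nabla\omega_{eq}$.

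Translations leave ${\bf c}$ unchanged. Hence $\partial_{x_1}\omega_{eq}$ and $\partial_{x_2}\omega_{eq}$ lie in $\ker\cL_{eq}$.

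For the rotation $\alpha_3$ we have $\partial_{\alpha_3}\omega_{eq}(R_{\alpha_3}x)=\partial_\theta\omega_{eq}=AJ_1(kr)\cos\theta\,{\bf 1}_{\rm D}=f_1^{\rm ev}$. On the velocity side, $\partial_{\alpha_3}R^T_{\alpha_3}{\bf c}_{eq}=(0,-1)$. This yields $\cL_{eq}f_1^{\rm ev}=-\partial_{x_2}\omega_{eq}$.

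For the amplification $\alpha_4$, the derivative of $\tau\omega_{eq}$ is $\omega_{eq}=f_1^{\rm odd}$ and the derivative of $\gamma{\bf c}_{eq}$ is ${\bf c}_{eq}=(1,0)$. This yields $\cL_{eq}f_1^{\rm odd}=\partial_{x_1}\omega_{eq}$.

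The dilation is not needed here. It only reproduces a combination of these directions, since $x\cdot\nabla\omega_{eq}$ is paired with $-{\bf c}_{eq}\cdot\nabla\omega_{eq}$.

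\emph{Identification with \eqref{f0oddev}.} It remains to check that $\partial_{x_1}\omega_{eq}=f_0^{\rm odd}$ and $\partial_{x_2}\omega_{eq}=f_0^{\rm ev}$. For this I would use the ladder identities
\begin{equation*}
(\partial_{x_1}\pm\im\partial_{x_2})\big(J_n(kr)e^{\im n\theta}\big)=\mp k\,J_{n\pm1}(kr)e^{\im(n\pm1)\theta},
\end{equation*}
applied to $J_1(kr)\sin\theta=\Im\big(J_1e^{\im\theta}\big)$. Taking half-sums and half-differences gives
\begin{equation*}
\partial_{x_1}\big(J_1\sin\theta\big)=-\tfrac{k}{2}J_2\sin 2\theta,\qquad \partial_{x_2}\big(J_1\sin\theta\big)=\tfrac{k}{2}\big(J_0+J_2\cos2\theta\big),
\end{equation*}
inside ${\rm D}$. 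Multiplying by $A=2k/J_0(k)$ produces $f_0^{\rm ev}$ exactly, and produces $f_0^{\rm odd}$ up to the normalizing constant $-k^2/J_0(k)$. That constant is harmless: I would rescale $f_0^{\rm odd}$, or equivalently read \eqref{actionJordan} up to that constant, tracking it carefully since it is the only sign/constant bookkeeping in the argument.

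\emph{Main obstacle: justification across $\partial{\rm D}$.} The formal differentiation must be justified in the distributional sense. Since $J_1(k)=0$, $\omega_{eq}$ is continuous and Lipschitz on $\R^2$, so $\nabla\omega_{eq}=(\nabla\omega_{eq}){\bf 1}_{\rm D}$ carries no surface measure on the circle. The translated and rotated profiles are also differentiable in $\alpha$ in $L^1\cap L^p$, so the difference quotients converge and the chain rule is legitimate.

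The truly delicate term is $\diver(u[f]\omega_{eq})$ when $f=\partial_{x_j}\omega_{eq}$ is discontinuous across $\partial{\rm D}$. Here I would use that $\omega_{eq}$ vanishes on $\partial{\rm D}$, so no boundary contribution arises. As a sanity check, I would verify directly that $\cL_{eq}$ annihilates $\partial_{x_j}\omega_{eq}$ via the identity $\partial_{x_j}F(\omega_{eq},{\bf c}_{eq})=0$, which holds pointwise inside and outside ${\rm D}$ and distributionally across the boundary.
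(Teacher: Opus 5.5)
Your proposal is essentially the paper's proof: differentiating $F(\tau_{\bm\alpha}\omega_{eq},\gamma_{\bm\alpha}{\bf c}_{eq})=0$ along the two translations, the rotation and the amplification gives $\cL_{eq}(\partial_s\omega_s)=\partial_s{\bf c}_s\cdot\nabla\omega_{eq}$, and Bessel recurrences then identify $\partial_{x_1}\omega_{eq}$ and $\partial_{x_2}\omega_{eq}$ (your ladder identities are a tidy way to organize that step). Your constant bookkeeping is right and catches a slip the paper's proof passes over: $\partial_{x_1}\omega_{eq}=-\frac{k^2}{J_0(k)}J_2(kr)\sin(2\theta){\bf 1}_{\rm D}$, so $\cL_{eq}f_1^{\rm odd}=f_0^{\rm odd}$ holds only after the stated $f_0^{\rm odd}$ is multiplied by the positive factor $-k^2/J_0(k)$, whereas $f_0^{\rm ev}$, $f_1^{\rm odd}$, $f_1^{\rm ev}$ and the relation $\cL_{eq}f_1^{\rm ev}=-f_0^{\rm ev}$ are correct as written.
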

\begin{proof}
We will use the family of symmetries introduced in Section \ref{symmetries}. 
Let $s$ be a real number parameterizing one of  those invariant transformations, and let $(\omega_s, \mathbf{c}_s)$ be the related Lamb-Chaplygin dipole. By differentiating $F(\omega_s, \mathbf{c}_s) = 0$ with respect to $s$ at  $s=\Bar{s}$, where $\omega_{\Bar{s}} = \omega_{eq}$, one obtains the identity
\begin{equation}
    D_{\omega}F(\omega_{eq},{\bf c}_{eq})[\left. \partial_s \omega_s \right|_{s=\Bar{s}}] + D_{\mathbf{c}} F(\omega_{eq},{\bf c}_{eq}) [\left. \partial_s \mathbf{c}_s \right|_{s=\Bar{s}}] = 0 ,
\end{equation}
namely
\begin{equation}
\cL_{eq}(\left. \partial_s \omega_s \right|_{s=\Bar{s}}) = \left. \partial_s \mathbf{c}_s \right|_{s=\Bar{s}} \cdot \nabla \omega_{eq},
\end{equation}
where $\cL_{eq}$ is the linearized operator in \eqref{cLeq}.
For horizontal and vertical translations we have $\Bar{s}=0$ and
\begin{align}
     \cL_{eq} \, f_0^{\rm odd}  = {\cL}_{eq} \, f_0^{\rm ev} = 0,\quad \text{ where } f_0^{\rm odd} = \partial_{x_1} \omega_{eq}, \quad f_0^{\rm ev} =\partial_{x_2} \omega_{eq}.
\end{align}
Indeed, in polar coordinates, 
\begin{equation*}
    \partial_{x_1} \omega_{eq} = A \sin(\theta) \cos(\theta) \left(J_{1}^{'}(kr) k - \frac{1}{r} J_{1}(kr) \right),
\end{equation*}
and
\begin{equation*}
    \partial_{x_2} \omega_{eq} = A J_{1}^{'}(kr) k \sin^{2}(\theta) + \frac{1}{r} A J_{1}(kr) \cos^{2}(\theta).
\end{equation*}
By classical trigonometric identities and recurrence formulas for Bessel functions in \cite[Appendix~B, Section~B.2]{grafakos2008}, one obtains the explicit formulas \eqref{f0oddev}.
For the amplitude rescaling we have $\Bar{s}=1$ and
\begin{equation*}
    \cL_{eq} \, f_1^{\rm odd} = \mathbf{c}_{eq} \cdot \nabla \omega_{eq} = \partial_{x_1} \omega_{eq} = f_{0}^{\rm odd}, \text{ where } f_1^{\rm odd}:=\omega_{eq}\,,
\end{equation*}
whence the formula for $f_1^{\rm odd}$ in \eqref{f1oddev} descends from \eqref{Lambdipole}.
Finally, for the rigid rotation we have $\Bar{s}=0$ and
\begin{equation*}
 \cL_{eq} \, f_1^{\rm ev} =-(\mathbf{c}_{eq}^{\perp} \cdot \nabla) \omega_{eq} = - \partial_{x_2} \omega_{eq} = - f_0^{\rm ev},\text{ where }f_1^{\rm ev}:=\partial_{\theta} \omega_{eq},
\end{equation*}
and one obtains the explicit formula for $f_1^{\rm ev}$ in \eqref{f1oddev}. 
\end{proof}
\begin{rmk}
    By considering the dilation symmetry in \ref{dilation} and  differentiating $F(\omega_{s},{\bf c}_{s})=0 $ with respect to $s$, one obtains another kernel function for $\cL_{eq}$, namely $rJ_0(kr)\sin(\theta) {\bf 1}_{\rm D}$. We also observe that the unstable directions established in Lemma \ref{generalizedkernel} have their support inside the unit disk.
\end{rmk}

It is interesting to notice that any perturbation of the Lamb-Chaplygin dipole $\omega_{eq}$ in \eqref{Lambdipole}, within the class $X_p := L^1(\R^2)\cap L^p(\R^2)$, $p>2$, can be decomposed into another Lamb-Chaplygin dipole $\tau_{{\bm \alpha}_0} \omega_{eq}$ plus a {\it stabilized} perturbation, namely a perturbation that is orthogonal to the two generalized eigenvectors of $\cL_{{\bm \alpha}_0}$. Let us state the result as follows. 
\begin{lem}
    For every $f \in X_p$, there exists an element ${\bm \alpha}_0 \in \mathcal{G}$ (specifically, a combination of a rotation and an amplification) and a function $g \in X_p \cap \mathrm{span}\{\tau_{{\bm \alpha}_0}  f_{1}^{\rm{odd}}, \tau_{{\bm \alpha}_0}  f_{1}^{\rm{ev}} \}^{\perp}$ such that
    \begin{equation*}
        \omega_{eq} + f = \tau_{{\bm \alpha}_0} \omega_{eq} + g.
    \end{equation*}
\end{lem}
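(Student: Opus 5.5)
The plan is to restrict to the two-parameter subfamily of $\mathcal G$ given by rotations and amplifications centered at the origin. Write $\bm\alpha=(0,0,\beta,a,1)$, so that $\tau_{\bm\alpha}\omega_{eq}(x)=a\,\omega_{eq}(R_\beta x)$. In polar coordinates this reads
\[
\tau_{\bm\alpha}\omega_{eq}=aA J_1(kr)\sin(\theta+\beta)\mathbf 1_{\rm D}=a\cos\beta\, f_1^{\rm odd}+a\sin\beta\, f_1^{\rm ev}.
\]
Consequently $\tau_{\bm\alpha}$ leaves the unit disk invariant and maps $\mathrm{span}\{f_1^{\rm odd},f_1^{\rm ev}\}$ into itself. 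The same computation gives $\tau_{\bm\alpha}f_1^{\rm odd}=a(\cos\beta f_1^{\rm odd}+\sin\beta f_1^{\rm ev})$ and $\tau_{\bm\alpha}f_1^{\rm ev}=a(-\sin\beta f_1^{\rm odd}+\cos\beta f_1^{\rm ev})$. Hence, whenever $a\neq0$,
\[
\mathrm{span}\{\tau_{\bm\alpha}f_1^{\rm odd},\tau_{\bm\alpha}f_1^{\rm ev}\}=\mathrm{span}\{f_1^{\rm odd},f_1^{\rm ev}\}=:V,
\]
independently of $\bm\alpha$. The orthogonality condition therefore does not depend on the parameters, and the problem becomes linear.

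Next, set $g:=\omega_{eq}+f-\tau_{\bm\alpha_0}\omega_{eq}$, so the identity in the statement holds by definition. Since $f_1^{\rm odd},f_1^{\rm ev}$ are bounded and compactly supported, they lie in $X_p\cap L^{p'}$, and the $L^2$-type pairings $\langle h,f_1^{\rm odd}\rangle$, $\langle h,f_1^{\rm ev}\rangle$ are well defined for every $h\in X_p\subset L^1$. Moreover $g\in X_p$, because $\tau_{\bm\alpha_0}\omega_{eq}\in X_p$. The functions $f_1^{\rm odd},f_1^{\rm ev}$ are $L^2$-orthogonal (they carry $\sin\theta$ versus $\cos\theta$) and have equal norm $N^2:=\|f_1^{\rm odd}\|_{L^2}^2$. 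The condition $g\perp V$ then amounts to the two scalar equations
\[
a\cos\beta\,N^2=\langle \omega_{eq}+f,f_1^{\rm odd}\rangle=N^2+\langle f,f_1^{\rm odd}\rangle,\qquad a\sin\beta\,N^2=\langle f,f_1^{\rm ev}\rangle,
\]
where we used $\omega_{eq}=f_1^{\rm odd}$. The pair $(a\cos\beta,a\sin\beta)$ is just a point of $\R^2$ in polar form, so this system always has a solution: take $a$ equal to the modulus and $\beta$ equal to the argument of the right-hand side vector divided by $N^2$.

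The only delicate point is the degenerate case in which the right-hand side vanishes, i.e.\ $\langle f,f_1^{\rm odd}\rangle=-N^2$ and $\langle f,f_1^{\rm ev}\rangle=0$. This forces $a=0$, which is not an invertible group element and would also make the span computation above degenerate. I would handle it by allowing the amplification parameter $\alpha_4$ to vanish, reading ${\rm span}$ through $V$ directly, or else by noting that with $a=0$ one simply has $g=\omega_{eq}+f\perp V$, with $V$ the fixed plane. If a nonzero $a$ is required, the statement should be restricted to $f$ outside this codimension-two affine set, for instance to $\|f\|_{L^1}$ small, where $a\approx1$ and $\beta\approx0$ by continuity. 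I expect this degenerate case to be the main (and only) obstacle; the rest is explicit.
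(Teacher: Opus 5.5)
Your proposal is correct and follows essentially the same route as the paper: project $f$ onto $V=\mathrm{span}\{f_1^{\rm odd},f_1^{\rm ev}\}$, observe that $V$ is invariant under rotations and nonzero amplifications, and write the coefficient vector $\bigl(1+\langle f,f_1^{\rm odd}\rangle/N^2,\ \langle f,f_1^{\rm ev}\rangle/N^2\bigr)$ in polar form to get the angle and amplitude of $\bm\alpha_0=(0,0,\beta,a,1)$. Your bookkeeping is in fact slightly more careful than the paper's: you normalize by $N^2$ where the paper divides by $\|f_1^{\rm odd}\|_{L^2}$, you use the full argument where the paper's $\arctan$ fails when $1+\beta^{\rm odd}\le 0$, and you flag the degenerate case $a=0$, in which the paper's parametrization (requiring only $\alpha_5\neq0$) makes the statement hold trivially.
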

\begin{proof}
    Given $f \in X_p$, let 
    $
    \beta^{\rm odd} := \frac{\langle f, f_1^{\rm odd} \rangle}{\|f_1^{\rm odd}\|_{L^2}}$ and $ \beta^{\rm ev} := \frac{\langle f, f_1^{\rm ev}\rangle}{\|f_1^{\rm ev}\|_{L^2}}.
    $
 We have
    \begin{equation*}
        \omega_{eq} + f = \omega_{eq} + \beta^{\rm odd} f_{1}^{\rm{odd}} + \beta^{\rm ev} f_{1}^{\rm{ev}} + g,
    \end{equation*}
    where $g := f - \beta^{\rm odd} f_{1}^{\rm{odd}} -\beta^{\rm ev} f_{1}^{\rm{ev}}$. Moreover, by \eqref{f1oddev}, $$     \omega_{eq} + \beta^{\rm odd} f_{1}^{\rm{odd}} + \beta^{\rm ev} f_{1}^{\rm{ev}} = (1 + \beta^{\rm odd}) \omega_{eq} + \beta^{\rm ev }\partial_{\theta} \omega_{eq}.$$
    We look for parameters $s$ and $\delta$ such that $ (1 + \beta^{\rm odd}) \sin(\theta) + \beta^{\rm ev} \cos(\theta) = \delta \sin(\theta + s)$. By applying the standard trigonometric addition formulas and solving the resulting two-dimensional system, we obtain
    \begin{equation*}
        \delta := \sqrt{(1+\beta^{\rm odd})^2 + (\beta^{\rm ev})^2}, \qquad s := \arctan \left( \frac{\beta^{\rm ev}}{1 + \beta^{\rm odd}} \right)
        ,
    \end{equation*}
   and define ${\bm \alpha}_0 := (0,0, s, \delta, 1)$.
    To conclude we observe that $\mathrm{span}\{ f_{1}^{\rm{odd}},  f_{1}^{\rm{ev}} \} = \mathrm{span}\{\tau_{{\bm \alpha}_0}  f_{1}^{\rm{odd}}, \tau_{{\bm \alpha}_0}  f_{1}^{\rm{ev}} \} $.
\end{proof} 
We conclude this section by pointing out that the other source of linear instability in Theorem \ref{lineardyn_Thm}, namely the average of the initial datum $w_0$ on the unit disk, is also an artifact of linearizing around the single equilibrium $\omega_{eq}$ rather than considering stability with respect to the whole family of traveling solutions. Indeed, if one takes
as initial datum for the full Euler equations in  \eqref{Euler_equations}
$$
\omega_{in}:=\omega_{eq}+\mu \mathbf 1_D,
\qquad \mu\in \R, \quad \mu \neq 0,
$$
with initial traveling velocity $ {\bf c}_{eq}=(1,0) $,
then the corresponding solution is given exactly by
$$
\omega(t)
=
\tau_{\bm\alpha(t)}(\omega_{eq}+\mu \mathbf 1_D),\quad \bm\alpha(t)
:=
\big(-\tfrac{2}{\mu}\sin\big(\tfrac{\mu t}{2}\big),
\tfrac{2}{\mu}\big(
1-\cos\big(\tfrac{\mu t}{2}\big)
\big),-\tfrac{\mu t}{2},1,1 \big) ,
$$
with traveling velocity 
$
{\bf c}(t) =
\gamma_{\bm\alpha(t)}{\bf c}_{eq} =
R_{\mu t/2}{\bf c}_{eq}.
$
In other words, the perturbation $\mu \mathbf 1_D$ does not generate a
genuine instability of the nonlinear dynamics: it merely adds a rigid
rotation inside the disk, so that the solution remains for all times on the
orbit of the family of traveling Lamb--Chaplygin dipoles.\smallskip 

We now turn our attention to the dynamics generated by the linearized flow \eqref{cLeq}.

\section{Linear dynamics near the Lamb-Chaplygin dipole}\label{sec:cuore}
In this section, we continue our study of the linear system in \eqref{cLeq} by shifting the focus from the spectral properties of $\cL_{eq}$ to the reduction of the problem to the internal dynamics of the disk. This leads to the definition of the operator $\sL$ introduced in \eqref{Linear_operator}. The analysis of its spectral properties, together with an abstract argument, culminates in the proof of Theorem \ref{Spectralstability_Thm}. The spectral picture will then be combined with a dynamical argument to obtain suitable growth estimate and complete the proof of Theorem~\ref{lineardyn_Thm}.

\subsection{Reduction to the unit disk}\label{reductiontodisk}

Let us consider the system \eqref{cLeq}. The first term of $\cL_{eq}$ is the classical transport operator which generates a strongly continuous semigroup on $X_p = L^1(\R^2)\cap L^p(\R^2)$, $p>2$, whereas the second one is a bounded operator on the same class. Indeed, for every $w\in X_p$ one has $u[w] \in L^{\infty}(\R^2)$ (cfr. \cite[Remark 1.0.2]{afterVishik}) and, by inspection, $\nabla \omega_{eq} \in L^{1}(\R^2) \cap L^{\infty}(\R^2)$.
By  \cite[Chapter III, Theorem 1.3]{EngelNagel} we conclude that $\cL_{eq}$ generates a strongly continuous semigroup on $X_p$.

To study the linear stability of the Lamb-Chaplygin dipole, we first observe that the dynamics can be rigorously restricted to the unit disk ${\rm D}$. We split a solution $w(t)\in X_p$ of $ \partial_t w(t) = \cL_{eq} \, w(t)$ into $w(t) = w_{ext}(t) + w_{int}(t)$. Here we define $w_{ext}$ as the solution of
\begin{equation} \label{wext}
    \begin{cases}
        \partial_{t} w_{ext}(t)= -u_{eq} \cdot \nabla w_{ext}(t), \\
        w_{ext}(0) = w(0) (1 - \mathbf{1}_{{\rm D}}),
    \end{cases}
\end{equation}
where $\mathbf{1}_{{\rm D}}$ is the characteristic function of the unit disk. 
In view of the streamlines of the velocity flow $u_{eq}$, see Figure \ref{fig:Lambdipole}, $w_{ext}$ remains spatially supported outside the unit disk. We define $
    w_{int}(t) := w(t) - w_{ext}(t) $,
and observe that, since $\cL_{eq} \, w_{ext} = -u_{eq} \cdot \nabla w_{ext}$, 
\begin{equation} \label{Euler_moving_int}
     \partial_{t} w_{int}  = \mathcal{L}_{eq} \, w - (u_{eq} \cdot \nabla) w_{ext} = \mathcal{L}_{eq} \, w_{int}.
\end{equation}

We claim that ${\rm{supp}}(w_{int}(t)) \subseteq {\rm D}$ for every $t \geq 0$. Indeed, let $r(t):= w_{int}(t) - \mathbf{1}_{{\rm D}} w_{int}(t)$, where $\mathbf{1}_{{\rm D}}$ is the indicator function of the unit disk. We have
\begin{equation*}
    \partial_{t} r(t) = \mathcal{L}_{eq} \, w_{int} - \mathbf{1}_{{\rm D}} \,  \mathcal{L}_{eq} \, w_{int} = - u_{eq} \cdot \nabla r(t)
\end{equation*}
where the last identity descends from the fact that $u_{eq} \cdot \nabla \mathbf{1}_{{\rm D}}=0$ since $\nabla \mathbf{1}_{{\rm D}}$ is a distribution supported on $\partial {\rm D}$ and directed along its normal vector $\mathbf{n}$, whereas $u_{eq}$ is tangential to $\partial {\rm D}$ (the boundary of the disk is a streamline for the flow). By uniqueness of the solution for the transport equation arising from $r(0)=0$, we have that $r(t) $ vanishes identically. This proves our claim.

We observe that, as a consequence of the divergence Theorem, the total circulation $\int_{\R^2} w(t) {\rm d}x$ is constant in time. Moreover, since $w_{ext}$ is governed by the transport equation, its circulation is preserved at any time. We conclude that 
$$
\int_{{\rm D}} w_{int}(t,x) \, {\rm d}x = \int_{\R^2} w(t,x) \, {\rm d}x - \int_{\R^2 \setminus {\rm D}} w_{ext}(t,x) \, {\rm d}x =  \int_{{\rm D}} w(0,x) \, {\rm d}x
$$
is constant as well. 
{Let us consider $w_{int}(t) = \widetilde w(t) + \mu \ \mathbf{1}_{{\rm D}}$, where $\mu := |{\rm D}|^{-1} \int_{{\rm D}} w_{int}(0) \, {\rm d}x
$ and
$\widetilde w$ has zero average. By \eqref{Euler_moving_int},
\begin{equation}\label{complete-evolution}
    \partial_{t} \widetilde w = - \diver(u_{eq} \widetilde w) - \diver(u[\widetilde w] \omega_{eq}) - u[\mu \ {\bf 1}_{\rm D}] \cdot \nabla \omega_{eq} = \cL_{eq} \widetilde w -\frac{\mu}{2} f_1^{\rm ev},
\end{equation}
with $f_1^{\rm ev}$ in \eqref{f1oddev}. Indeed, by solving the Poisson problem $    \Delta \psi = \mu \ \mathbf{1}_{{\rm D}} \ {\rm in} \ \mathbb{R}^2$ with $ |\nabla \psi|$ goes to $0$ as $|x|\to +\infty$,
we find that the general solution inside the unit disk is $\psi = \frac{\mu}{4} |x|^2 + C$. Thus, $u[\mu] = \nabla^{\perp} \psi = \frac{\mu}{2} x^{\perp}$ on ${\rm D}$. Moreover, since $x^{\perp} \cdot \nabla=\partial_{\theta}$ in polar coordinates, we have $
  u[\mu] \cdot \nabla \omega_{eq} = \frac{\mu}{2} \partial_{\theta} \omega_{eq} = \frac{\mu}{2} f_1^{\rm ev}$ by \eqref{f1oddev}.
}

\subsection{The homogeneous problem} \label{passingtosL} We now restrict our analysis to the homogeneous part of \eqref{complete-evolution}. The linear evolution of an initial data supported on ${\rm D}$ with \emph{zero average} is determined by
\begin{equation}\label{questaqua}
    \partial_{t} w =  - u_{eq} \cdot \nabla w - \nabla^{\perp} \omega_{eq} \cdot u^{\perp}[w] =  - u_{eq} \cdot \nabla w +k^2 u_{eq} \cdot u^{\perp}[w],  
\end{equation}
since, by the functional relation \eqref{functionalrelation}, we have
\begin{equation}
    \nabla^\perp \omega_{eq} =-k^2 u_{eq}.
\end{equation}
By the incompressibility condition, we know that $u^{\perp}[w]$ is minus the gradient of a scalar function $\psi$ solving
\begin{equation} \label{Poisson_problem}
    \begin{cases}
        \Delta \psi =  w \qquad {\rm{in}} \ {\rm D}, \\
     \Delta \psi = 0 \qquad {\rm{in}} \ {\R^2 \setminus {\rm D}},
    \end{cases}
\end{equation}
with $|\nabla \psi|(x) \to 0$ as $x \to \infty$. The function $\psi_{ext} := \psi|_{\R^2 \setminus {\rm D}}$ is harmonic and with gradient decaying at infinity. Thus, in polar coordinates,
\begin{equation}\label{psiext}
\psi_{ext}(r,\theta) =  \zeta \ln(r) + \psi_0 +  \sum_{m \in \Z \setminus \{0\} } \psi_m r^{-|m|} e^{\im m \theta} \, ,
\end{equation}
for some complex coefficients $\zeta$ and $\psi_m $, $m\in \Z$. Since $w$ is average free, one has
\begin{equation}
    0 = \int_{\rm D} w(x) \, {\rm d}x = \int_{\rm D} \Delta \psi(x) \, {\rm d}x = \oint_{\partial {\rm D}} \partial_r \psi(\sigma) \, {\rm d}\sigma = 2\pi \zeta,
\end{equation}
where the last identity follows by differentiating \eqref{psiext} on
$\partial {\rm D}$ and integrating in $\theta$, since all the
non-zero Fourier modes have zero average. Hence $\zeta=0$.

As a consequence, at the boundary of the unit disk, $\psi_{ext}$ fulfills the relation
$$
(\partial_r \psi_{ext})(1,\theta)  = - |D_\theta| \psi_{ext}(1,\theta),
$$
where $|D_\theta|$ is the Fourier multiplier  
\begin{equation} \label{D-to-N}
    |D_\theta| (e^{i m \theta}) : = |m| e^{im\theta}.
\end{equation}
Thus, \eqref{Poisson_problem} is reduced to the following Poisson equation with Robin boundary condition
\begin{equation}\label{PoissonRobin}
    \begin{cases}
        \Delta \psi = w \qquad &{\rm{in}} \ {\rm D}, \\
        \partial_{n} \psi =- |D_\theta|\psi \qquad &{\rm{on}} \ \partial {\rm D}.
    \end{cases}
\end{equation}
Let $\Delta_{D\! N}^{-1}$ denote the operator that associates with $\omega \in L^2_0({\rm D})$ the unique (up to a constant) solution of \eqref{PoissonRobin}.  By the previous passages we have
\begin{equation}
    u^{\perp}[w] = -\nabla  \Delta_{D\! N}^{-1}w \, .
\end{equation}
Finally,
we write \eqref{questaqua} as
\begin{align}\label{effectivesystem}
    \partial_{t} w = - u_{eq} \cdot \nabla w - k^2 u_{eq}  \cdot \nabla (\Delta_{D\! N}^{-1} \, w ) = - (u_{eq} \cdot \nabla) ({\rm Id} + k^2 \Delta_{D\! N}^{-1}) w = \sL w,
\end{align}
where $\sL$ is the operator introduced in \eqref{Linear_operator}.  

We end this section by endowing $L^2_0({\rm D})$ with an explicit basis given by  eigenfunctions of the self-adjoint and compact operator $\Delta_{D\!N}^{-1}$.
Let $m\geq 0$ be an integer. We recall that the $m$-th Bessel function of the first kind $J_m:\R \to \R$ is defined as the solution of the following ODE
\begin{equation}
    \label{def:Jm}
  t^2 J_m''(t) + t J_m'(t) + (t^2 - m^2 ) J_m(t) = 0,
\end{equation}
so that, for every $\alpha \in \R$,
$
    \Delta [ J_m (\alpha r) e^{\pm \im m \theta} ] = -\alpha^2 J_m(\alpha r)  e^{\pm \im m \theta}  
$. Let $\alpha \neq 0$,
by \eqref{PoissonRobin}, $w:= J_m(\alpha r)  e^{\pm \im m \theta}$ is such that $\Delta_{D\!N}^{-1}w = -\alpha^{-2} w  $ if and only if $\psi := -\alpha^{-2}w $ fulfills $\partial_n \psi = - |D_\theta| \psi$. The latter condition boils down to
$
\alpha J'_m(\alpha) + m J_m(\alpha) = 0 
$.
By recalling the well-known relation $xJ_m'(x) = x J_{m-1}(x) - m J_m(x) $, we conclude that $f$ is an eigenfunction of $\Delta_{D\!N}^{-1}$ if
\begin{equation}
    \alpha J_{m-1}(\alpha)  = 0 \, , 
\end{equation}
i.e\  $\alpha\in \{ \pm j_{m,n} \}_{n\geq 1} $ where $j_{m,n}$ is the $n$-th strictly positive root of $J_{m}(x)$. 
One has $w \in L^2_0({\rm D})$ since
\begin{align*}
    \int_{\rm D} w(x) \, {\rm d}x = \int_{\rm D} \Delta \Delta_{D\!N}^{-1} w(x) \, {\rm d}x = \oint_{\partial {\rm D}} \partial_n \Delta_{D\!N}^{-1} w(x) \, {\rm d}\sigma =  \oint_{\partial {\rm D}} \partial_n \psi(\sigma) \, {\rm d}\sigma \stackrel{\eqref{PoissonRobin}}{=} -\oint_{\partial {\rm D}} |D_\theta| \psi(\sigma) \, {\rm d}\sigma  \stackrel{\eqref{D-to-N}}{=} 0 \, . 
\end{align*} 
Let us observe that each eigenfunction associated with a negative value of $\alpha$ is a multiple of another one associated with $|\alpha|$ because of the parity properties of the Bessel functions.
In conclusion,
thanks to the Hilbert-Schmidt theorem,  the following set forms a basis of $L^2_0({\rm D})$
\begin{equation} \label{Besselbasis}
    \mathcal{B}:= \{J_0(j_{1,n} r)\}_{n\geq 1} \cup \{ J_{m}(j_{m-1,n} r) e^{\im m \theta} \}_{m\geq 1, n\geq 1}.
\end{equation}

We are now in a position to study the linear system \eqref{effectivesystem} generated by the operator $\sL$ in \eqref{Linear_operator}.

\subsection{The Hamiltonian operator $\sL$}\label{sec:Hamiltonian} In this section we conclude our proof of Theorem \ref{Spectralstability_Thm}, by a complete analysis of the spectral properties of the operator $\sL$ governing \eqref{effectivesystem}.

The operator $\sL$ in \eqref{Linear_operator} is {\it Hamiltonian} in the sense of \cite[Definition 1.4]{InstabilityCellularFlow}, namely it decomposes into
\begin{equation}\label{Hamiltonian}
    {\sL} = {\mathcal{J}} {\mathcal{H}}, \qquad {\rm{with}} \qquad \mathcal{J} := - u_{eq} \cdot \nabla, \qquad {\rm{and}} \qquad \mathcal{H} := {\rm Id} + k^2 \Delta_{D\! N}^{-1},
\end{equation}
where:
\begin{enumerate}[label=(\roman*)]
    \item the operator $\mathcal{J}: {\rm Dom}(\mathcal{J}) \subset L_{0}^{2}({\rm D}) \rightarrow L_{0}^{2}({\rm D})$, with ${\rm Dom}(\mathcal{J}):= \left\{ f \in L_{0}^{2}({\rm D}) : \mathcal{J}f \in L_{0}^{2}({\rm D}) \right\}$, is densely defined, closed and skew-adjoint;
    \item the operator $\mathcal{H}: L_{0}^{2}({\rm D}) \rightarrow L_{0}^
    {2}({\rm D})$ is bounded and self-adjoint.
\end{enumerate}
In particular, the operator $\sL$ is closed with ${\rm Dom}(\sL) := {\rm Dom}(\mathcal{J})$.

We are interested in all possible initial data $w_0 \in L^2_0({\rm D})$ such that the evolution $e^{t\sL} w_0 $ grows in time.
A first answer to the problem is provided by classifying
every \emph{unstable generalized eigenfunction} of  $\sL$. We recall that $f \in \cap_{m=1}^\infty {\rm Dom}(\sL^m)$ is a generalized eigenfunction associated with an eigenvalue $\lambda \in \C$ if, for some integer $m \geq 1$, one has 
\begin{equation}\label{def:geneig}
    (\sL - \lambda)^m f = 0\, .
\end{equation}
We shall call such an $f$ \emph{unstable} if (cfr.\ \cite[p.\ 13]{InstabilityCellularFlow})
$$
\text{\it either}\quad ({\bf A})\ \ \Re\, \lambda \neq 0\quad \text{\it or}\quad ({\bf B})\ \ \Re\, \lambda = 0 \text{ and } m\geq 2\,.
$$
Indeed, one can readily prove that $\|e^{t\sL}f\|_{L^2} $ will grow exponentially fast in the first case, and polynomially fast in the second case. 

Suppose $m\geq 1$ is the first integer for which \eqref{def:geneig} holds. If $m=1$ then $f_0 := f \neq 0$ is an actual eigenvector of $\sL$. On the other hand if, $m\geq 2$, we can associate with $f$ the eigenvector $f_0 := (\sL - \lambda)^{m-1} f\neq 0$. In both cases, we have
\begin{equation}
    \label{Kreinsignature}
  f_0 \in {\rm Ran}(\sL)\quad\text{and}\quad  \langle \cH f_0 , f_0 \rangle = 0,
\end{equation}
as deduced, e.g., in the first lines of the {\it Alternative proof} of \cite[Proposition 2.9]{InstabilityCellularFlow}.

The self-adjoint operator $\mathcal{H} = {\rm Id} + k^2 \Delta_{D\! N}^{-1}$ is diagonal in the basis $\mathcal{B}$ in \eqref{Besselbasis}, since
\begin{equation}\label{actionH}
   \mathcal{H}(J_{0}( j_{1,n} r)) = \left( 1 - \frac{k^2}{j_{1,n}^{2}} \right) J_{0}(j_{1,n} r), \quad \mathcal{H} (J_{m}( j_{m-1,n} r) e^{ \im m \theta})  = \left( 1 - \frac{k^2}{j_{m-1,n}^{2}} \right) J_{m}(j_{m-1,n} r) e^{\im m \theta}.
\end{equation}
The kernel ${\rm K}$ of $\cH$ and its maximal negative invariant subspace ${\rm N}$ are respectively given by 
\begin{equation}\label{negativespace}
    {\rm{K}} = {\rm{span}}\{ J_{0}(k r), \ J_2(k r) \cos(2\theta), \  J_2(k r) \sin(2\theta)\},\quad {\rm{N}} = {\rm{span}}\{ J_1( j_{0,1} r) \cos(\theta), \ J_1( j_{0,1} r) \sin(\theta) \}.
\end{equation}
In particular the functions $f_0^{\rm odd}$ and $f_0^{\rm ev}$ in \eqref{f0oddev} lie in the kernel ${\rm K}$.
To prove \eqref{negativespace} we observe that, in view of \eqref{actionH}, any nonpositive direction of $\cH$ corresponds to an integer $m \geq 1$ such that
$$
j_{m-1,n} \leq k = j_{1,1}.
$$
By definition 
$$
    j_{m-1,1} < j_{m-1,2} < j_{m-1,3} < \dots 
$$
and, by properties of  the Bessel functions' roots, for every integer $n\geq 1$ one has
\begin{equation}\label{secondpropBessel}
        j_{0,n} < j_{1,n} < j_{2,n} < j_{3,n} <  \dots 
    \end{equation}
Thus, to conclude \eqref{negativespace}, it suffices to prove that $j_{1,1} < j_{0,2}$. The latter inequality descends from the identity $J_0'(x) = -J_1(x)$, whence one deduces that the function $J_0$, which is $1$ at $x=0$, is monotonically decreasing in the interval $(0,j_{1,1})$ where $j_{0,1}$ lies (by \eqref{secondpropBessel}). By Rolle's theorem there will be a zero of $J_1$ in the interval $(j_{0,1},j_{0,2})$ and by the considerations above this is exactly $j_{1,1}$. 
\begin{figure}[h!]
    \centering
    {\footnotesize
    \begin{tikzpicture}[scale=0.5, transform shape]
      \matrix (m) [matrix of math nodes, 
                   row sep=1em,      
                   column sep=1em,   
                   nodes={anchor=center}] 
      {
        J_{2,1} & < & J_{2,2} & < & \dots &   &       \\
        \vee    &   & \vee    &   & \vee    &   &       \\
        |[draw=red, thick, inner sep=2pt, name=j11]| J_{1,1} & < & J_{1,2} & < & \dots &  \\
        \vee    &   &    \vee     &   & \vee    &   &       \\
        J_{0,1} & < & 
        |[draw=red, thick, inner sep=1.5pt, name=j02]| J_{0,2} & < & \dots &   &       \\
      };
      
      \draw[red, thick] (j11.south east) -- (j02.north west) node[midway, sloped, above, text=red, font=\Large] {$<$};
    \end{tikzpicture}
    $\qquad\qquad$
    \begin{tikzpicture}[x=1cm,y=2.3cm]
    \draw[->] (-0.1,0) -- (5.9,0) node[right] {$x$};
    \draw[->] (0,-0.45) -- (0,1.08);

    \draw (0,0.025) -- (0,-0.025);
    \node at (-0.18,-0.10) {$0$};

    \foreach \x/\lab in {
        2.4048/{j_{0,1}},
        3.8317/{j_{1,1}},
        5.5201/{j_{0,2}}
    }{
        \draw (\x,0.025) -- (\x,-0.025)
            node[below=4pt] {$\lab$};
    }

    \foreach \x in {2.4048,3.8317,5.5201}{
        \draw[dashed,gray] (\x,-0.45) -- (\x,1.05);
    }

    \draw[thick,blue]
    plot[smooth] coordinates {
    (0.000,1.000) (0.250,0.984) (0.500,0.938) (0.750,0.864) (1.000,0.765)
    (1.250,0.646) (1.500,0.512) (1.750,0.370) (2.000,0.224) (2.250,0.083)
    (2.405,0.000) (2.750,-0.164) (3.000,-0.260) (3.250,-0.332) (3.500,-0.380)
    (3.750,-0.402) (4.000,-0.397) (4.250,-0.366) (4.500,-0.320) (4.750,-0.265)
    (5.000,-0.178) (5.250,-0.091) (5.520,0.000) (5.800,0.091)
    };

    \draw[thick,red]
    plot[smooth] coordinates {
    (0.000,0.000) (0.250,0.124) (0.500,0.242) (0.750,0.349) (1.000,0.440)
    (1.250,0.511) (1.500,0.558) (1.750,0.580) (2.000,0.577) (2.250,0.548)
    (2.500,0.497) (2.750,0.425) (3.000,0.339) (3.250,0.242) (3.500,0.137)
    (3.832,0.000) (4.000,-0.066) (4.250,-0.154) (4.500,-0.231) (4.750,-0.292)
    (5.000,-0.328) (5.250,-0.341) (5.520,-0.340) (5.800,-0.309)
    };

    \node[blue] at (1.0,0.86) {$J_0$};
    \node[red] at (1.7,0.68) {$J_1$};
    \end{tikzpicture}
    }
    \caption{Graphic representation of two key elements in the proof of \eqref{negativespace}. On the left, we represent the ordering of Bessel functions' roots. On the right, a graphic visualization of the inequality $j_{1,1} < j_{0,2}$.}
    \label{fig:schemino}
\end{figure}

By general properties of Hamiltonian operators, see e.g.\ \cite[Lemma 2.1]{InstabilityCellularFlow}, formula \eqref{negativespace} entails that the operator $\sL$ has at most two linearly independent unstable eigenvectors. 
In our case, however, the negative directions of $\cH$ are not related to some unstable direction of $\sL$, but are absorbed by the Jordan chains generated by the symmetries listed in Section \ref{symmetries} that we described in Lemma \ref{generalizedkernel}. {This property is presented in the following result.}

\begin{lem}\label{stability_condition}
The functions $f_1^{\rm odd}$ and $f_1^{\rm ev}$ in \eqref{f1oddev} satisfy
   $ \langle \cH f_{1}^{\rm odd}, f_{1}^{\rm odd} \rangle_{L^{2}({\rm D})} = \langle \cH f_{1}^{\rm ev}, f_{1}^{\rm ev} \rangle_{L^{2}({\rm D})} < 0$.
\end{lem}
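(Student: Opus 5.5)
The plan is to compute $\cH f_1^{\rm odd}$ in closed form by solving the Robin problem \eqref{PoissonRobin} explicitly. I will then evaluate the resulting quadratic form with a Bessel-function identity. Finally I will obtain the statement for $f_1^{\rm ev}$ from rotation invariance.

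\emph{Step 1: solving the Robin problem.} Write $f:=f_1^{\rm odd}=A J_1(kr)\sin\theta\,{\bf 1}_{\rm D}$ with $A=2k/J_0(k)$. It has zero average, so it lies in $L^2_0({\rm D})$. Since $\Delta f=-k^2 f$ in ${\rm D}$, I look for $\psi=\Delta_{D\!N}^{-1}f$ of the form
\[
\psi=-k^{-2}f+c\,r\sin\theta ,
\]
where the second term is harmonic and carries the single Fourier mode $m=1$. The Robin condition then reads $\partial_r\psi=-\psi$ at $r=1$. To check it I use three facts:
\begin{itemize}
\item $J_1(k)=0$;
\item $J_1'(k)=J_0(k)-J_1(k)/k=J_0(k)$;
\item consequently $\partial_r(-k^{-2}f)|_{r=1}=-k^{-1}A J_0(k)\sin\theta=-2\sin\theta$.
\end{itemize}
The boundary condition becomes $-2+c=-c$, so $c=1$. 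This reproduces, up to sign, the matching between the interior and exterior parts of $\psi_{eq}$. It gives
\[
\cH f=f+k^2\psi=k^2\,r\sin\theta=k^2x_2 ,
\]
up to an additive constant. That constant is irrelevant here, because it pairs to zero with the zero-average function $f$.

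\emph{Step 2: evaluating the quadratic form.} We have
\[
\langle \cH f,f\rangle = k^2 A\int_0^{2\pi}\!\sin^2\theta\,{\rm d}\theta\int_0^1 r^2J_1(kr)\,{\rm d}r=\pi k^2A\int_0^1 r^2J_1(kr)\,{\rm d}r .
\]
To compute the radial integral I use $\frac{\rm d}{{\rm d}r}\big(r^2J_2(kr)\big)=k r^2J_1(kr)$. This gives $\int_0^1 r^2J_1(kr)\,{\rm d}r=J_2(k)/k$. By the recurrence $J_2(x)=\frac{2}{x}J_1(x)-J_0(x)$ and $J_1(k)=0$, we get $J_2(k)=-J_0(k)$. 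Therefore
\[
\langle \cH f_1^{\rm odd},f_1^{\rm odd}\rangle = \pi k^2\cdot\frac{2k}{J_0(k)}\cdot\Big(-\frac{J_0(k)}{k}\Big)=-2\pi k^2<0 .
\]
This value is independent of the sign of $J_0(k)$.

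\emph{Step 3: the even function.} $f_1^{\rm ev}$ is $f_1^{\rm odd}$ rotated by $\pi/2$. Both $\cH$ and the $L^2({\rm D})$ pairing commute with rotations: the Robin condition \eqref{PoissonRobin} is rotation invariant, since $|D_\theta|$ is a Fourier multiplier in $\theta$. Hence the same computation with $\cos\theta$ in place of $\sin\theta$ yields $\cH f_1^{\rm ev}=k^2x_1$ and the same value $-2\pi k^2$. Alternatively, I could expand both functions in the basis \eqref{Besselbasis} and use \eqref{actionH}, but the direct computation avoids infinite series.

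\emph{Main obstacle.} The only delicate point is Step 1: correctly identifying $\Delta_{D\!N}^{-1}f$, in particular the harmonic correction $r\sin\theta$ forced by the Dirichlet-to-Neumann boundary condition. Once $\cH f=k^2x_2$ is established, the remaining steps are routine Bessel identities.
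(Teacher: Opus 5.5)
Your proposal is correct and follows essentially the paper's route: you solve the Robin problem explicitly via the ansatz $-k^{-2}f+r\sin\theta$, which is the paper's $g(t)=k^{-2}\big(\tfrac12 J_1'(k)t-J_1(t)\big)$ after rescaling, obtain $\cH f_1^{\rm odd}=k^2 r\sin\theta$, and evaluate the radial integral using $\int_0^k z^2J_1(z)\,{\rm d}z=k^2J_2(k)$, $J_1'(k)=J_0(k)$ and $J_2(k)=-J_0(k)$ to get $-2\pi k^2$. Your rotation-invariance argument for $f_1^{\rm ev}$ simply makes explicit the step the paper describes as an ``identical'' computation.
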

\begin{proof} We compute $\frac{J_0(k)}{2k} \Delta_{D\!N}^{-1} f_1^{\rm odd} = g(k r) \sin(\theta)$ where
$$
t^2 g''(t) + t g'(t) -g(t) = \frac{t^2}{k^2} J_1(t)\ \text{ for } t \in \R \text{ and }\
k g'(k) = - g(k).
$$
One verifies that $g(t) = k^{-2} (\tfrac12 J_1'(k) t - J_1(t)  )$. 
It follows that
$
    \frac{J_0(k)}{2k} \cH(f_{1}^{\rm odd}) = \frac{k}{2} J_{1}^{'}(k) r \sin (\theta). 
$
Then
\begin{equation*}
\begin{aligned}
    \frac{J_0(k)^2}{4k^2}  \langle \cH f_{1}^{\rm odd}, f_{1}^{\rm odd} \rangle_{L^{2}({\rm D})} &= \int_0^{2\pi} \int_0^1 \frac{
        k}{2} r^2 J_1'(k) J_1(kr) \sin^{2}(\theta)  \, {\rm d}r \, {\rm d}\theta \\
&= \frac{\pi k}{2} J_1'(k) \int_0^1 r^2 J_1(kr) \, {\rm d}r =  \frac{\pi }{2k^2} J_1'(k) \int_0^k z^2 J_1(z) \, {\rm d}z ,
\end{aligned}
    \end{equation*}
    where in the last step we performed the change of variable $z=kr$. By the recurrence formulas for Bessel functions in \cite[Appendix~B, Section~B.2]{grafakos2008}, we have the following identities
\begin{equation*}
    \int_{0}^{k} z^2 J_{1}(z) \, {\rm d}z = k^2 J_2(k), \qquad J_1'(k) = J_0(k), \qquad
J_2(k) = - J_0(k).
\end{equation*}
We conclude that $
  \langle \cH f_{1}^{\rm odd}, f_{1}^{\rm odd} \rangle = -2 \pi k^2  < 0$. The computation for $\langle \cH f_{1}^{\rm ev}, f_{1}^{\rm ev} \rangle$ is identical.
\end{proof}

The above lemma justifies the following  characterization.

\begin{cor}\label{Jordanchainlength2} 
    The Jordan chains $\{f_1^{\rm odd},f_0^{\rm odd}\} $ and $\{f_1^{\rm ev},f_0^{\rm ev}\} $ of the operator $\sL$ introduced in Lemma \ref{generalizedkernel} do not admit any extension, in the sense that there does not exist $f_2\in L^2_0({\rm D})$ such that $ \sL f_2 = f_1^{\rm odd} $ or $ \sL f_2 = f_1^{\rm ev} $.
\end{cor}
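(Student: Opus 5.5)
We argue by contradiction, using the Hamiltonian structure $\sL=\cJ\cH$ from \eqref{Hamiltonian} together with Lemma \ref{stability_condition}. Suppose there is $f_2\in{\rm Dom}(\sL)\subseteq L^2_0({\rm D})$ with $\sL f_2=f_1^{\rm odd}$; the case of $f_1^{\rm ev}$ is identical, up to the sign in \eqref{actionJordan}. The idea is that the existence of $f_2$ would force $\langle \cH f_1^{\rm odd},f_1^{\rm odd}\rangle=0$, which contradicts Lemma \ref{stability_condition}.

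The key identity is the skew-adjointness of $\cJ$ combined with the self-adjointness of $\cH$. For any $g,h\in{\rm Dom}(\sL)$ we have
\begin{equation*}
\langle \cH\sL g, h\rangle=\langle \cH\cJ\cH g,h\rangle=-\langle \cH g,\cJ\cH h\rangle=-\langle \cH g,\sL h\rangle .
\end{equation*}
For this to be legitimate we need $\cH g,\cH h\in{\rm Dom}(\cJ)$. This holds because $g\in{\rm Dom}(\sL)={\rm Dom}(\cJ)$ means exactly $\cJ\cH g\in L^2_0$, which is the definition of ${\rm Dom}(\cJ)$ applied to $\cH g$. (This is implicitly how ${\rm Dom}(\sL)$ must be read, namely as $\{g:\cJ\cH g\in L^2_0\}$.) For the elements at hand the point is easy anyway: $\cH f_1^{\rm odd}$ is explicit, equal to $\tfrac{k^2}{J_0(k)}J_1'(k)\,r\sin\theta$ by the computation in Lemma \ref{stability_condition}, and $f_2\in{\rm Dom}(\sL)$ gives $\cJ\cH f_2=f_1^{\rm odd}\in L^2_0$.

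Now apply the identity with $g=f_2$ and $h=f_1^{\rm odd}$, using $\sL f_2=f_1^{\rm odd}$ and $\sL f_1^{\rm odd}=f_0^{\rm odd}$:
\begin{equation*}
\langle \cH f_1^{\rm odd},f_1^{\rm odd}\rangle=\langle \cH\sL f_2,f_1^{\rm odd}\rangle=-\langle \cH f_2,\sL f_1^{\rm odd}\rangle=-\langle f_2,\cH f_0^{\rm odd}\rangle=0,
\end{equation*}
since $f_0^{\rm odd}\in{\rm K}=\ker\cH$ by \eqref{negativespace}. This contradicts Lemma \ref{stability_condition}, which gives the value $-2\pi k^2<0$. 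For the even chain the same computation yields $\langle\cH f_1^{\rm ev},f_1^{\rm ev}\rangle=\langle f_2,\cH f_0^{\rm ev}\rangle=0$, because $f_0^{\rm ev}\in{\rm K}$. Again this is a contradiction.

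The only delicate point is the domain justification in the integration by parts. One has to check that $\cJ$ is skew-adjoint on $L^2_0({\rm D})$ with the stated maximal domain. This is property (i) following \eqref{Hamiltonian}, which rests on $u_{eq}$ being divergence-free and tangent to $\partial{\rm D}$, so that no boundary terms appear. If one prefers to avoid domain issues entirely, one can test against the explicit smooth function $\cH f_1^{\rm odd}$: by skew-adjointness, $\langle \cJ\cH f_2,\cH f_1^{\rm odd}\rangle=-\langle \cH f_2,\cJ\cH f_1^{\rm odd}\rangle$, where $\cJ\cH f_1^{\rm odd}=f_0^{\rm odd}$ is smooth on $\overline{\rm D}$. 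Symmetry of $\cH$ then finishes the argument.
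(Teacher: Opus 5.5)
Your proof is correct and is essentially the paper's argument: the paper writes $\langle \cH f_1^{\rm odd}, f_1^{\rm odd}\rangle = \langle \cH f_1^{\rm odd}, \cJ\cH f_2\rangle = -\langle f_0^{\rm odd}, \cH f_2\rangle = 0$, using skew-adjointness of $\cJ$, self-adjointness of $\cH$ and $f_0^{\rm odd}\in{\rm K}$, which contradicts Lemma \ref{stability_condition}. Your domain remarks add only detail the paper leaves implicit: $\sL f_2 = f_1^{\rm odd}\in L^2_0({\rm D})$ already gives $\cH f_2\in{\rm Dom}(\cJ)$, and $\cH f_1^{\rm odd}$ is an explicit smooth function.
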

\begin{proof}
Indeed one would have $0 \neq \langle \cH  f_1^{\rm odd},  f_1^{\rm odd} \rangle =  \langle \cH  f_1^{\rm odd},  \cJ \cH f_2 \rangle = - \langle f_0^{\rm odd},  \cH f_2 \rangle  = 0$, which is  a contradiction. The same holds in the other case.
\end{proof}
\begin{rmk}
    \label{rmk:dualJordanchain}
Canonically, one can construct Jordan chains for $\sL^*=-\cH \cJ$ as $\{ \cH f_1^{\rm odd}, \cH f_0^{\rm odd} \}$ and $\{ \cH f_1^{\rm ev}, \cH f_0^{\rm ev} \}$. However, since $f_0^{\rm odd}$ and $f_0^{\rm ev}$ belong to the kernel of $\cH$, this construction just provides two kernel elements $g_0^{\rm odd}:= \cH f_1^{\rm odd}$, $ g_0^{\rm ev}:= \cH f_1^{\rm ev}$ of $\sL^*$. Only the last one gives rise to a nontrivial Jordan chain. Indeed, setting $g_1^{\rm ev}(r) := \tfrac12r^2 k^2$ one has $\sL^*  g_1^{\rm ev} = \cH f_{1}^{\rm{ev}}$ by inspection. On the contrary, since $f_1^{\rm odd}$ lies in ${\rm Ker}(\cJ)$, one readily obtains that there does not exist any $g_1^{\rm odd} \in L^2_0({\rm D})$ such that $ \sL^* g_1^{\rm odd} = \cH f_1^{\rm odd}$. 

As anticipated in the introduction, such a vector $g_1^{\rm odd}$ would have allowed us to define the subspace $Y := \{g_0^{\rm odd},g_1^{\rm odd},g_0^{\rm ev},g_1^{\rm ev}\}^{\perp} $  being closed, $\sL$-invariant and supporting $\langle\cH\cdot,\cdot\rangle$ coercively. The argument below circumvents this obstruction by using a different invariant structure.
\end{rmk}
In light of Lemma \ref{stability_condition},
we define the auxiliary closed
subspaces $\tilde Z \subset Z \subset L^2_0({\rm D})$ given by
    \begin{equation}\label{def:Z}
    \begin{aligned}
        &Z := \{ f \in L^2_0({\rm D}) : \langle \cH f, f_{1}^{\rm odd} \rangle = \langle \cH f, f_{1}^{\rm ev} \rangle = 0 \},\\  
        &\tilde Z := \{f \in Z : \langle \cH f, h \rangle = 0\text{ for every }h \in {\rm Ker}(\sL) \}\, .
    \end{aligned}
    \end{equation}
Let us list their main properties.
\begin{enumerate}[label=(\roman*)]
    \item Since $\langle \cH f_{1}^{odd}, f_1^{odd} \rangle  = \langle \cH f_{1}^{ev}, f_1^{ev} \rangle\neq 0 $ and $\langle \cH f_{1}^{\rm odd}, f_1^{\rm ev} \rangle = 0$, one readily obtains that
$$
 L^2_0({\rm D}) =  {\rm span} \{ f_1^{\rm odd}, f_1^{\rm ev} \} \oplus Z .
$$ 
\item \label{eigsinZ} One has ${\rm Ran}(\sL) \subseteq \tilde Z\subset  Z$. Indeed, if $f= \cJ \cH g$ then 
$$
\langle \cH  f, f_{1}^{\rm odd} \rangle = -\langle \cH g, \sL f_{1}^{\rm odd}  \rangle = - \langle \cH g, f_{0}^{\rm odd}\rangle = - \langle  g, \cH f_{0}^{\rm odd} \rangle = 0, 
$$
and similarly $\langle \cH  f, f_{1}^{\rm ev} \rangle=0$. Moreover, for every $h $ in $ {\rm Ker}(\sL) $, $\langle \cH f , h \rangle = - \langle \cH g ,\sL h \rangle = 0 $.
\item \label{positivesemidefinite} The operator $\mathcal{H}$ is positive semi-definite on $Z$. Indeed, any $f \in Z$ such that $\langle \mathcal{H} f, f \rangle < 0$ would be a third negative direction for $\cH$ linearly independent from $f_1^{\rm odd}$ and $f_1^{\rm ev}$, in contradiction with \eqref{negativespace}.
\end{enumerate}

 A direct consequence of property \ref{positivesemidefinite} of $Z$ is the following
\begin{lem}\label{lemZ}
    Let $f \in Z$ be such that $\langle \cH f ,f \rangle = 0$, then $\cH f = 0$.
\end{lem}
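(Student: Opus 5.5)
The plan is to use the standard Cauchy--Schwarz argument for a positive semi-definite symmetric form on $Z$. We know from property \ref{positivesemidefinite} that the bilinear form $b(f,g) := \langle \cH f, g\rangle$ is symmetric, since $\cH$ is bounded and self-adjoint with real coefficients in the basis $\mathcal B$. We also know that $b(g,g)\geq 0$ for every $g\in Z$. Take $f\in Z$ with $b(f,f)=0$. For any $g\in Z$ and any $t\in\R$, we have $f+tg\in Z$ because $Z$ is a linear subspace, hence
\begin{equation*}
0\le b(f+tg,f+tg) = 2t\, b(f,g) + t^2 b(g,g).
\end{equation*}
Letting $t\to 0^\pm$ forces $b(f,g)=0$; complex-valued $g$ are handled by splitting into real and imaginary parts, or by using $t\in\C$. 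Hence $\langle \cH f, g\rangle = 0$ for all $g\in Z$, that is, $\cH f \in Z^{\perp}$.

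The second step is to identify $Z^\perp$. By definition, $Z$ is the common kernel of the two functionals $g\mapsto \langle g, \cH f_1^{\rm odd}\rangle$ and $g\mapsto\langle g,\cH f_1^{\rm ev}\rangle$, using self-adjointness of $\cH$. Since $Z$ is closed, $Z^\perp = {\rm span}\{\cH f_1^{\rm odd}, \cH f_1^{\rm ev}\}$. Therefore $\cH f = a\,\cH f_1^{\rm odd} + b\,\cH f_1^{\rm ev}$ for some scalars $a,b$.

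The final step removes $a$ and $b$ by pairing with $f_1^{\rm odd}$ and $f_1^{\rm ev}$. Since $f\in Z$, we have $\langle \cH f, f_1^{\rm odd}\rangle = 0$. We also have $\langle \cH f_1^{\rm ev}, f_1^{\rm odd}\rangle = 0$, which was noted in property (i) and follows from the orthogonality of $\sin\theta$ and $\cos\theta$ together with the diagonal action of $\cH$ on angular modes. Combining these gives $a\,\langle \cH f_1^{\rm odd}, f_1^{\rm odd}\rangle = 0$, and Lemma \ref{stability_condition} then yields $a=0$. The same argument gives $b=0$, so $\cH f = 0$.

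The only delicate point is the identification $Z^\perp={\rm span}\{\cH f_1^{\rm odd},\cH f_1^{\rm ev}\}$ inside $L^2_0({\rm D})$. This relies on $\cH f_1^{\rm odd}$ and $\cH f_1^{\rm ev}$ lying in $L^2_0({\rm D})$, which holds because $\cH$ maps $L^2_0({\rm D})$ into itself. It is then the elementary finite-codimension fact $(\ker\Lambda)^\perp = {\rm span}$ of the representers of $\Lambda$.
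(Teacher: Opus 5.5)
Your proof is correct and follows essentially the paper's route. The paper writes the first-order condition for the minimum of $\langle \cH g,g\rangle$ on $Z$ (with an extra norm constraint) via Lagrange multipliers as $\cH f=\mu_0 f+\mu_1\cH f_1^{\rm odd}+\mu_2\cH f_1^{\rm ev}$, then kills the multipliers by pairing with $f$, $f_1^{\rm odd}$, $f_1^{\rm ev}$, as you do; your expansion of $\langle \cH(f+tg),f+tg\rangle$ together with $Z^\perp={\rm span}\{\cH f_1^{\rm odd},\cH f_1^{\rm ev}\}$ states this condition directly, without the superfluous norm constraint or the regularity hypothesis of the multiplier theorem.
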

\begin{proof}
The quadratic form $q: L^2_0({\rm D}) \to \R$ given by $q(f):=\langle \mathcal{H} f, f \rangle$, which is positive semi-definite on $Z$, attains a minimum at  $f$ when constrained on the set $\{g\in Z : \| g \|_{L^2} = \| f \|_{L^2} \} $.
By the Lagrange multiplier method, $f$ will be a stationary point of the following functional
\begin{equation*}
    J(g, \mu_0, \mu_1, \mu_2) := \frac{1}{2} \langle \mathcal{H} g, g \rangle - \frac{1}{2} \mu_0 (\| g \|^2 - \|f\|^2) - \mu_1 \langle \mathcal{H}  g, f_{1}^{\rm odd} \rangle - \mu_2 \langle \mathcal{H}  g, f_{1}^{\rm ev} \rangle,
\end{equation*}
whose gradient with respect to $g$ at $g=f$ gives
\begin{equation*}
        \mathcal{H}  f - \mu_0 f - \mu_1 \mathcal{H}  f_{1}^{\rm odd} - \mu_2 \mathcal{H} f_{1}^{\rm ev} = 0.
\end{equation*}
Since, by hypothesis, $\langle \mathcal{H} f, f \rangle = 0$ and $f\in Z$, the inner products of the left-hand side above with the functions $f$, $f_1^{\rm odd}$ and $f_1^{\rm ev}$ respectively give $\mu_0 = \mu_1 = \mu_2 = 0$. Thus $
\cH f = 0$.
\end{proof}
We also obtain the following
\begin{cor}
    The operator $\cH$ is coercive on $Z\cap {\rm K}^\perp$, namely there exists $c>0$ such that
    \begin{equation}
        \label{coercivityH}
        \langle \cH f,f\rangle \geq c \|f\|^2,\quad \text{for every }f\in Z\cap {\rm K}^\perp.
    \end{equation}
\end{cor}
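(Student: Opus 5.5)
Since $\cH = {\rm Id} + k^2\Delta_{D\!N}^{-1}$ is diagonal in the basis $\mathcal B$ of \eqref{Besselbasis}, with eigenvalues $1 - k^2/j^2$ accumulating only at $1$, it is a compact perturbation of the identity. This rules out the possibility that $\langle \cH f_n,f_n\rangle\to 0$ along a weakly null sequence. The plan is to argue by contradiction and use this fact together with Lemma \ref{lemZ}.

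Assume \eqref{coercivityH} fails. Then there are $f_n \in Z\cap{\rm K}^\perp$ with $\|f_n\|=1$ and $\langle \cH f_n,f_n\rangle \to 0$; note this quantity is $\ge 0$ by property \ref{positivesemidefinite}. Pass to a subsequence with $f_n \rightharpoonup f$ weakly in $L^2_0({\rm D})$. The constraints $\langle \cH f, f_1^{\rm odd}\rangle = \langle f, \cH f_1^{\rm odd}\rangle=0$, the analogous one for $f_1^{\rm ev}$, and orthogonality to the finite-dimensional space ${\rm K}$ are all weakly closed, so $f\in Z\cap {\rm K}^\perp$. Compactness of $\Delta_{D\!N}^{-1}$ gives $\Delta_{D\!N}^{-1} f_n \to \Delta_{D\!N}^{-1} f$ strongly, and therefore
\begin{equation*}
0=\lim_n \langle \cH f_n,f_n\rangle = \lim_n\Big(1 + k^2\langle \Delta_{D\!N}^{-1} f_n, f_n\rangle\Big) = 1 + k^2 \langle \Delta_{D\!N}^{-1} f, f\rangle .
\end{equation*}
Since $\|f\|\le 1$, this yields $\langle \cH f,f\rangle = \|f\|^2 + k^2\langle\Delta_{D\!N}^{-1}f,f\rangle \le 0$. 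Combined with positive semi-definiteness on $Z$, we get $\langle \cH f,f\rangle=0$.

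Lemma \ref{lemZ} now gives $\cH f=0$, that is $f\in{\rm K}$. Since also $f\in {\rm K}^\perp$, we conclude $f=0$. But then the displayed identity reads $0 = 1 + 0$, a contradiction, which proves \eqref{coercivityH}.

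The step needing the most care is the passage to the weak limit. One must check that $Z\cap{\rm K}^\perp$ is weakly closed, which holds because it is a closed subspace, and that the limit of the quadratic form splits exactly as above. This splitting is where compactness of $\Delta_{D\!N}^{-1}$ enters, and compactness follows from the diagonalization \eqref{actionH} since $j_{m-1,n}\to\infty$.
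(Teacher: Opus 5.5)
Your proof is correct and follows essentially the same argument as the paper. Both take a normalized sequence in $Z\cap{\rm K}^\perp$ with $\langle \cH f_n,f_n\rangle\to 0$ and pass to a weak limit, use compactness of $\Delta_{D\!N}^{-1}$ to get $\langle \cH f,f\rangle\le 0$, and then apply Lemma \ref{lemZ} to force $f\in{\rm K}\cap{\rm K}^\perp=\{0\}$, which yields the contradiction $0=1$.
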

\begin{proof}
    By contradiction, let $\{f_n\}_{n\in \N} \subset Z\cap {\rm K}^\perp $, with $\|f_n\|=1$ for every $n\in \N$, be such that $ \langle \cH f_n,f_n\rangle \to 0$ as $n\to +\infty$. Up to subsequences, $f_n$ converges to some $f \in Z\cap {\rm K}^\perp$ as $n\to +\infty$ weakly in $L^2_0({\rm D})$. By \eqref{Hamiltonian},
    \begin{equation}
        \label{contradictionsource}
    0 \stackrel{n\to +\infty}{\longleftarrow} \langle \cH f_n,f_n\rangle = 1  + k^2 \langle \Delta_{D\! N}^{-1} f_n, f_n \rangle \stackrel{n\to +\infty}{\longrightarrow} 1 +  \langle \Delta_{D\! N}^{-1} f, f \rangle \geq \|f\|^2 +  \langle \Delta_{D\! N}^{-1} f, f \rangle = \langle \cH f,f\rangle,
    \end{equation}
    where we used that $\Delta_{D\! N}^{-1} f_n$ converges strongly to $\Delta_{D\! N}^{-1} f$ as $n\to +\infty$ because $\Delta_{D\! N}^{-1}$ is a compact operator. If follows that $\langle H f, f\rangle = 0$ and, by Lemma \ref{lemZ}, $f \in {\rm K}$. Thus $f=0$, and by \eqref{contradictionsource}, $\langle \cH f_n,f_n\rangle \to 1$ as $n\to +\infty$, which is a contradiction.
\end{proof}
We now prove a technical result  that will be fundamental in the sequel.
\begin{lem}\label{toSpain}
    One has $\overline{{\rm Ran}(\cJ)} \cap {\rm K} = {\rm span}\, \{f_0^{\rm odd},f_0^{\rm ev} \} $.
\end{lem}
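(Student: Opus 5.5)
The plan is to exploit the skew-adjointness of $\cJ$. Since $\cJ$ is closed, densely defined and skew-adjoint on $L^2_0({\rm D})$, we have $\overline{{\rm Ran}(\cJ)} = {\rm Ker}(\cJ^*)^\perp = {\rm Ker}(\cJ)^\perp$. The statement is therefore equivalent to
$$
{\rm K}\cap {\rm Ker}(\cJ)^\perp = {\rm span}\{f_0^{\rm odd},f_0^{\rm ev}\}.
$$

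The inclusion $\supseteq$ is immediate from Lemma \ref{generalizedkernel}: $f_0^{\rm odd} = \sL f_1^{\rm odd} = \cJ(\cH f_1^{\rm odd})\in{\rm Ran}(\cJ)$, and similarly $f_0^{\rm ev} = -\cJ(\cH f_1^{\rm ev})$. Both already lie in ${\rm K}$ by \eqref{negativespace}.

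For $\subseteq$, note that ${\rm K}$ is three-dimensional. It therefore suffices to show that any $g = aJ_0(kr) + bJ_2(kr)\cos(2\theta) + cJ_2(kr)\sin(2\theta)\in{\rm K}$ orthogonal to ${\rm Ker}(\cJ)$ satisfies one linear condition that is not satisfied identically on ${\rm K}$. The natural elements of ${\rm Ker}(\cJ)$ are functions of the stream function: for bounded measurable $F$, the function $F(\psi_{eq}) - |{\rm D}|^{-1}\int_{\rm D} F(\psi_{eq})$ lies in $L^2_0({\rm D})$ and is annihilated by $u_{eq}\cdot\nabla$. Since $g$ has zero average, orthogonality gives $\int_{\rm D} g\,F(\psi_{eq})\,{\rm d}x = 0$ for all such $F$.

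I would take $F = \mathbf 1_{\{\psi_{eq}>s\}}$, with $s$ close to $\max_{\rm D}\psi_{eq}$. Because $\psi_{eq} = -\frac{A}{k^2}J_1(kr)\sin\theta$, this maximum is attained at the vortex center $x^* = (0,-r^*)$ or $(0,r^*)$, depending on the sign of $A$, where $kr^*$ is the first zero of $J_1'$. The maximum is nondegenerate, so the superlevel sets shrink to $x^*$ as $s$ increases. By continuity of $g$,
$$
0 = \frac{1}{|\{\psi_{eq}>s\}|}\int_{\{\psi_{eq}>s\}} g \,{\rm d}x \longrightarrow g(x^*).
$$
Hence $g(x^*)=0$. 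At $\theta = \pm\pi/2$ we have $\sin 2\theta = 0$ and $\cos 2\theta = -1$, so this condition reads $aJ_0(kr^*) - bJ_2(kr^*) = 0$. It defines a subspace of ${\rm K}$ of dimension exactly $2$, provided $(J_0(kr^*),J_2(kr^*))\neq(0,0)$. This holds because $J_0 + J_2 = \frac{2}{t}J_1$ and $J_1(kr^*)\neq 0$. Consistently, $f_0^{\rm odd}$ vanishes there, and $f_0^{\rm ev}\propto J_0-J_2 = 2J_1'$ vanishes at $kr^*$. So this two-dimensional subspace coincides with ${\rm span}\{f_0^{\rm odd},f_0^{\rm ev}\}$, which concludes the proof.

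The main obstacle is making the localization step rigorous. One must verify that $x^*$ is a strict, nondegenerate maximum of $\psi_{eq}$ on ${\rm D}$, with the level sets near it being small topological disks. This should follow from $\psi_{eq}=0$ on $\partial{\rm D}$, the sign of $J_1\sin\theta$ on each half-disk, and a Hessian computation at $x^*$ using the Bessel equation \eqref{def:Jm}. One must also verify that $kr^*$ lies in $(0,k)$ and is the unique critical radius there. If this became delicate, a fallback is to test against the single element $\psi_{eq}^2$ minus its mean and check numerically, or via Bessel integral identities, that $\int_{\rm D}\psi_{eq}^2\, g \neq 0$ for the element $g\in{\rm K}$ orthogonal to $f_0^{\rm odd}$ and $f_0^{\rm ev}$.
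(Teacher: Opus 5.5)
Your proposal is correct and follows essentially the paper's route: both use $\overline{{\rm Ran}(\cJ)}={\rm Ker}(\cJ)^\perp$, get the inclusion $\supseteq$ from Lemma \ref{generalizedkernel}, and find the obstruction by averaging over streamlines shrinking to the elliptic point $kr^*=z_*$, where $J_0(z_*)=J_2(z_*)=J_1(z_*)/z_*\neq 0$. The differences are minor. The paper tests only $J_0(kr)$ and cites the characterization of the orthogonal projection onto ${\rm Ker}(\cJ)$ as streamline averaging. You instead test a general element of ${\rm K}$ against the explicit kernel elements $\mathbf 1_{\{\psi_{eq}>s\}}$ minus their mean, which uses only the easy inclusion (functions of $\psi_{eq}$ lie in ${\rm Ker}(\cJ)$) and makes the localization at the elliptic point self-contained.
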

\begin{proof}
By Lemma \ref{generalizedkernel}, $\overline{{\rm Ran}(\cJ)} \cap {\rm K} \supseteq {\rm span}\, \{f_0^{\rm odd},f_0^{\rm ev} \} $ where, by \eqref{negativespace}, ${\rm K}$ is a three-dimensional subspace of $L^2_0({\rm D})$. Then the proof boils down to showing that $f:=J_0(kr)$ does not belong to $\overline{{\rm Ran}(\cJ)}$. Let us introduce the orthogonal projection $P$ onto ${\rm Ker}(\cJ)$, which is closed because $\cJ$ is a closed operator. Clearly, if $Pf \not \equiv 0$ then $f \notin \overline{{\rm Ran}(\cJ)}= {\rm Ker}(\cJ)^\perp$. $P$ is characterized as the operator that averages its input on the streamlines of the Lamb-Chaplygin dipole in \eqref{Lambdipole}, see \cite[Lemma 2.3]{Lin2004}. In particular, at the elliptic point $(r_*,\theta_*) = (z_*/k, \pi/2)$, where $z_*$ is the first positive zero of $J_1'$, the function $Pf$ attains the value of $f$, namely $J_0(z_*)$. By exploiting the identity $J_1'(z)=J_0(z)-J_1(z)/z$, we obtain that $J_0(z_*)= J_1(z_*)/z_*$ which is different from zero because $J_1>0$ on the interval $(0,k)$, see Figure \ref{treBessel}. Thus $P(J_0(kr))$ is not identically vanishing.
\end{proof}
\begin{figure}[h!!!]
\centering
\begin{tikzpicture}[scale=0.7, transform shape]
\begin{axis}[
    width=12cm,
    height=7cm,
    xmin=0, xmax=4.3,
    ymin=-0.5, ymax=1.05,
    axis lines=middle,
    xlabel={$z$},
    ylabel={},
    xtick={1.841,3.832},
    xticklabels={$z_*$,$k$},
    ytick={1},
    legend style={at={(0.98,0.98)},anchor=north east},
    samples=100,
    domain=0:4.3,
    clip=false,
]
\addplot[thick] coordinates {
(0,1.000) (0.2,0.990) (0.4,0.960) (0.6,0.912) (0.8,0.846)
(1.0,0.765) (1.2,0.671) (1.4,0.567) (1.6,0.455) (1.841,0.316)
(2.0,0.224) (2.2,0.111) (2.4,0.003) (2.6,-0.096) (2.8,-0.185)
(3.0,-0.260) (3.2,-0.321) (3.4,-0.366) (3.6,-0.393) (3.8,-0.403)
(4.0,-0.397) (4.2,-0.376)
};
\addlegendentry{$J_0(z)$}

\addplot[thick,dashed] coordinates {
(0,0.000) (0.2,0.100) (0.4,0.196) (0.6,0.287) (0.8,0.369)
(1.0,0.440) (1.2,0.499) (1.4,0.543) (1.6,0.570) (1.841,0.582)
(2.0,0.577) (2.2,0.556) (2.4,0.520) (2.6,0.471) (2.8,0.410)
(3.0,0.339) (3.2,0.261) (3.4,0.178) (3.6,0.092) (3.832,0.000)
(4.0,-0.066) (4.2,-0.145)
};
\addlegendentry{$J_1(z)$}

\addplot[thick,dotted,smooth] coordinates {
(0.00000,0.500000)
(0.20000,0.492521)
(0.40000,0.470332)
(0.60000,0.434170)
(0.80000,0.385235)
(1.00000,0.325147)
(1.20000,0.255892)
(1.40000,0.179750)
(1.60000,0.099217)
(1.75000,0.037515)
(1.84118,0.000002)
(1.90000,-0.024054)
(2.00000,-0.064472)
(2.20000,-0.142348)
(2.40000,-0.214236)
(2.60000,-0.277889)
(2.80000,-0.331361)
(3.00000,-0.373072)
(3.20000,-0.401858)
(3.40000,-0.417009)
(3.60000,-0.418287)
(3.80000,-0.405930)
(4.00000,-0.380639)
(4.20000,-0.343546)
};
\addlegendentry{$J_1'(z)$}
\addplot[gray] coordinates {(1.841,-0.5) (1.841,1.05)};
\addplot[only marks,mark=*,mark size=1.8pt] coordinates {(1.841,0.316)};
\node at (axis cs:2.25,0.38) {$\bigl(z_*,J_0(z_*)\bigr)$};
\end{axis}
\end{tikzpicture}
\caption{\label{treBessel} Qualitative plot of $J_0$, $J_1$, and $J_1'$. Here $z_*$ denotes the first positive zero of $J_1'$, while $k=j_{1,1}$ is the first positive zero of $J_1$.}
\end{figure}

We are now in a position to prove Theorem \ref{Spectralstability_Thm}. 
\begin{proof}[Proof of Theorem \ref{Spectralstability_Thm}] The operator $\sL$ in \eqref{Linear_operator} is a relatively compact perturbation of the skew-adjoint operator $\cJ$. Thus the set $\sigma_{L^2_0({\rm D})}(\sL) \setminus \im \R$ is contained in the pure-point spectrum of $\sL$. Regarding the latter, let $f$ be an  unstable generalized eigenfunction of $\sL$ associated with an eigenvalue $\lambda \in \C$. As done in \eqref{Kreinsignature}, we can construct an eigenfunction $f_0\in {\rm Ran}(\sL)$ such that $\langle \cH f_0, f_0\rangle = 0 $. By property \ref{eigsinZ} of $Z$, we have $f_0\in Z$ and, by Lemma \ref{lemZ}, $\lambda = 0$. Thus
any unstable generalized eigenfunction of $\sL$ is associated with $0$. This concludes the proof of \eqref{primopunto}.

Let us consider the point \eqref{secondopunto}. By Corollary \ref{Jordanchainlength2}, we already have two Jordan chains of length $2$ in the generalized kernel of $\sL$. Let us suppose that another Jordan chain of arbitrary length, ending as $\{\dots,f_1,f_0\}$ with $\sL f_0= 0 $ and $\sL f_1 = f_0$, appears in the generalized kernel. Then, as for the proof of the first point, $f_0\in Z$ and $\langle \cH f_0, f_0\rangle =0 $, implying $f_0 \in {\rm Ran}(\cJ) \cap {\rm Ker}(\cH)$. By Lemma \ref{toSpain} we obtain that $f_0=\alpha f_0^{\rm odd} + \beta f_0^{\rm ev}$  for some $\alpha,\beta \in \C$. As a consequence $f_1 = \alpha f_1^{\rm odd} + \beta f_1^{\rm ev}$ and the Jordan chain we are considering is just a linear combination of the two in Corollary \ref{Jordanchainlength2}.
\end{proof}

\subsection{Linear stability} \label{sec:linearstability}
We now focus on the proof of Theorem \ref{lineardyn_Thm}. A first step is a weak separation result between the space ${\rm Ran}(\sL)\subseteq \tilde Z$ in \eqref{def:Z}, to which all trajectories are parallel, and the kernel ${\rm K}$ of the operator $\cH$, near which the Hamiltonian structure of the operator $\sL$ is insufficient to control the linear evolution.
\begin{lem} \label{banal_intersection}
    One has $\sL(\tilde Z)\cap {\rm K} =\{0\} $.
\end{lem}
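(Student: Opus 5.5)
The plan is to show that any $f\in\tilde Z\cap{\rm Dom}(\sL)$ with $h:=\sL f\in{\rm K}$ satisfies $\langle \cH f,f\rangle=0$. Lemma \ref{lemZ} then gives $\cH f=0$, hence $h=\cJ\cH f=0$.

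\emph{Step 1: identify $h$.} Since $h\in{\rm Ran}(\sL)\subseteq{\rm Ran}(\cJ)\subseteq\overline{{\rm Ran}(\cJ)}$ and $h\in{\rm K}$, Lemma \ref{toSpain} gives
$$h=a f_0^{\rm odd}+b f_0^{\rm ev},\qquad a,b\in\C.$$

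\emph{Step 2: decompose $f$.} By Lemma \ref{generalizedkernel} we have $\sL f_1^{\rm odd}=f_0^{\rm odd}$ and $\sL f_1^{\rm ev}=-f_0^{\rm ev}$. Set
$$g:=f-a f_1^{\rm odd}+b f_1^{\rm ev}.$$
This function lies in ${\rm Dom}(\sL)$, since $f_1^{\rm odd},f_1^{\rm ev}\in{\rm Dom}(\sL)$. It satisfies $\sL g=h-af_0^{\rm odd}-bf_0^{\rm ev}=0$, so $g\in{\rm Ker}(\sL)$ and
$$f=g+a f_1^{\rm odd}-b f_1^{\rm ev}.$$

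\emph{Step 3: compute the quadratic form.} Because $f\in\tilde Z$, we have $\langle\cH f,g\rangle=0$, using $g\in{\rm Ker}(\sL)$. Because $f\in Z$, we also have $\langle \cH f,f_1^{\rm odd}\rangle=\langle\cH f,f_1^{\rm ev}\rangle=0$. Expanding by sesquilinearity, where the coefficients only appear conjugated and so do not matter,
$$\langle\cH f,f\rangle=\langle \cH f,g\rangle+\bar a\langle\cH f,f_1^{\rm odd}\rangle-\bar b\langle \cH f,f_1^{\rm ev}\rangle=0.$$

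\emph{Step 4: conclude.} Since $f\in Z$ and $\langle\cH f,f\rangle=0$, Lemma \ref{lemZ} gives $\cH f=0$. Therefore $h=\sL f=\cJ\cH f=0$.

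The main obstacle is Step 1. It requires recognizing that ${\rm Ran}(\sL)\cap{\rm K}$ is controlled by Lemma \ref{toSpain}, which excludes the direction $J_0(kr)$. Once $h$ is reduced to the span of $f_0^{\rm odd}$ and $f_0^{\rm ev}$, it can be absorbed by the Jordan chains of Lemma \ref{generalizedkernel}. The defining orthogonality conditions of $\tilde Z$ are then exactly what is needed to kill $\langle\cH f,f\rangle$. The only technical point to check is the domain bookkeeping, namely that $g\in{\rm Dom}(\sL)$, which follows because $f_1^{\rm odd},f_1^{\rm ev}$ are smooth on ${\rm D}$.
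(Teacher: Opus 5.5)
Your proof is correct and follows the same route as the paper: Lemma~\ref{toSpain} to write $\sL f = a f_0^{\rm odd}+b f_0^{\rm ev}$, subtraction of the Jordan-chain vectors to leave a kernel element, vanishing of $\langle \cH f,f\rangle$ from the defining conditions of $\tilde Z$, and then Lemma~\ref{lemZ}. It is slightly leaner than the paper's version: the paper first proves $\beta=0$ separately (via $f_1^{\rm ev}=\cJ g$ with $g=-\tfrac12 r^2k^2$ and Lemma~\ref{stability_condition}), whereas you correctly observe that the three orthogonality conditions defining $\tilde Z$ kill $\langle\cH f,f\rangle$ in one stroke, so that step is not needed.
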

\begin{proof}
    Let ${\rm K} \ni f_0= \sL f_1 $ for some $f_1 \in {\tilde Z}$. In particular, $f_0$ lies in $\overline{{\rm Ran}(\cJ)} \cap {\rm K} $ and, by Lemma \ref{toSpain}, there exist $\alpha,\beta\in \R$ such that
    $
    f_0 = \alpha f_0^{\rm odd} + \beta f_0^{\rm ev}
    $. In view of \eqref{actionJordan},
    \begin{equation}\label{decomposition}
   f_1 = \alpha  f_1^{\rm odd} - \beta f_1^{\rm ev} + h, \quad h\in {\rm Ker}(\sL).
    \end{equation}
    One readily obtains that $\beta=0$ in \eqref{decomposition}. Indeed, by \eqref{def:Z},
    \begin{equation}\label{cancelationf1ev}
    0 = \langle \cH f_1 , f_1^{\rm ev} \rangle =-\beta \langle  \cH f_1^{\rm ev} ,  f_1^{\rm ev} \rangle + \langle  \cH h ,  f_1^{\rm ev} \rangle.
    \end{equation}
    Let us observe that $ f_1^{\rm ev} = \cJ g$, with $g(r):=  - \tfrac12 {r^2 k^2}{\bf 1}_{{\rm D}} $, by inspection.
     As a consequence, 
     \begin{equation}\label{cancelationoccurs}
    \langle  \cH h ,  f_1^{\rm ev} \rangle = \langle  \cH h,  \cJ g \rangle = - \langle  \sL h ,   g \rangle = 0.
     \end{equation}
     Since, by Lemma \ref{stability_condition}, $\langle  \cH f_1^{\rm ev} ,  f_1^{\rm ev} \rangle \neq 0$, we conclude from \eqref{cancelationf1ev}-\eqref{cancelationoccurs} that  $\beta$ vanishes. We now prove that $\alpha=0$ too. Again, by \eqref{def:Z} and \eqref{decomposition},
    \begin{equation}\label{la1}
    \begin{cases}
    0 = \langle \cH f_1 , f_1^{\rm odd} \rangle =\alpha \langle  \cH f_1^{\rm odd} ,  f_1^{\rm odd} \rangle + \langle  \cH h ,  f_1^{\rm odd} \rangle, \\
    0 = \langle \cH f_1 , h \rangle =\alpha \langle  \cH f_1^{\rm odd} ,  h \rangle + \langle  \cH h , h \rangle.
    \end{cases}
    \end{equation}
    As a consequence
    $$
    \langle \cH f_1 , f_1 \rangle \stackrel{\eqref{decomposition}}{=} \alpha \big(\alpha \langle  \cH f_1^{\rm odd} ,  f_1^{\rm odd} \rangle + \langle  \cH h ,  f_1^{\rm odd} \rangle\big) + \big( \alpha \langle  \cH f_1^{\rm odd} ,  h \rangle + \langle  \cH h , h \rangle \big) \stackrel{\eqref{la1}}{=} 0.
    $$
   By Lemma \ref{lemZ} we obtain that $\cH f_1 =0$. Thus, $0=\cJ \cH f_1 =\alpha f_0^{\rm odd}$, namely $\alpha = 0$ and $f_0$ vanishes.
\end{proof}
As a corollary, let $\{g_n\}_{n\in \N}\subset \tilde Z$ be a bounded sequence  and suppose that $\sL g_n$ converges to some function $h \in {\rm K}$ weakly in $L^2_0({\rm D})$. Then, up to subsequences, $g_n$ tends to some $g_\star \in \tilde Z$ weakly in $L^2_0({\rm D})$. Since $\sL$ is a closed operator, $\sL g_\star = h$ and, by the previous lemma, $h = 0$.

Relying on the latter result we show that no initial datum in ${\rm Ran}(\sL)$ yields linear growth.

\begin{lem} \label{thanksGodforthislemma}
    For every $f \in {\rm Ran}(\sL)$ one has $
    \sup_{t \geq 0}     \| e^{t\sL} f \|_{L^2_0({\rm D})} < + \infty$.
\end{lem}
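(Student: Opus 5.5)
We use the conserved energy $E(w) := \langle \cH w, w\rangle$. For $w(t)=e^{t\sL}w_0$ one has $\frac{d}{dt}\langle \cH w,w\rangle = 2\Re\langle \cH \cJ\cH w,w\rangle = 0$, because $\cJ$ is skew-adjoint. The whole argument therefore rests on turning conservation of $E$ into an $L^2$ bound, which is possible only where $\cH$ is coercive.

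Let $f=\sL g\in{\rm Ran}(\sL)$. First we arrange that $g\in\tilde Z$. We decompose $g$ along $L^2_0({\rm D})={\rm span}\{f_1^{\rm odd},f_1^{\rm ev}\}\oplus Z$. The components along $f_1^{\rm odd},f_1^{\rm ev}$ contribute $\pm f_0$ terms to $f$, and these lie in ${\rm Ker}(\sL)$, so their evolution is constant. We can also subtract from $g$ elements of ${\rm Ker}(\sL)$ to enforce the additional orthogonality defining $\tilde Z$. This costs some linear algebra, using that $\langle\cH\cdot,\cdot\rangle$ restricted to ${\rm Ker}(\sL)\cap Z$ is nonnegative. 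Alternatively, we can simply work with $f$ itself, which already lies in $\tilde Z$ by property (ii).

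The evolution then stays in ${\rm Ran}(\sL)\subseteq\tilde Z\subset Z$, since $e^{t\sL}\sL g=\sL e^{t\sL}g$. Hence $w(t)\in Z$ for all $t$ and $E(w(t))=E(f)$.

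Next we split $w(t)=w_K(t)+w_\perp(t)$, with $w_K$ the orthogonal projection onto the three-dimensional ${\rm K}$. Since $\cH w_K=0$ and $\cH$ is self-adjoint, $E(w)=\langle\cH w_\perp,w_\perp\rangle$. We need $w_\perp\in Z\cap{\rm K}^\perp$ in order to apply \eqref{coercivityH}. Membership in $Z$ is only a condition on $\cH w$, which is unchanged by adding kernel elements, so $w_\perp\in Z$. Coercivity then gives $\|w_\perp(t)\|^2\le c^{-1}E(f)$ uniformly in $t$.

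The main obstacle is bounding $w_K(t)$. We argue by contradiction. Suppose there are $t_n$ with $\|w(t_n)\|\to\infty$; then necessarily $\|w_K(t_n)\|\to\infty$. Normalize $v_n:=w(t_n)/\|w(t_n)\|$. Its ${\rm K}^\perp$ part tends to $0$ strongly, and since ${\rm K}$ is finite dimensional, $v_n\to v\in{\rm K}$ strongly (up to a subsequence), with $\|v\|=1$.

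Now $v_n=\sL g_n$ with $g_n:=e^{t_n\sL}g/\|w(t_n)\|$, and $g_n\in\tilde Z$ if we arranged $g\in\tilde Z$. To invoke the corollary of Lemma \ref{banal_intersection} we need $g_n$ bounded, and this is the delicate point. We would try to control $g_n$ by applying the same decomposition to $e^{t\sL}g$: its ${\rm K}^\perp\cap Z$ part is bounded by energy conservation, and its kernel part should be dominated by a multiple of $\|w(t_n)\|$. If boundedness of $g_n$ fails, an alternative is to use the closedness of ${\rm Ran}$ restricted to finite-codimension pieces.

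Granting boundedness, the corollary yields $v=0$, contradicting $\|v\|=1$. Hence $\sup_t\|w(t)\|<\infty$.
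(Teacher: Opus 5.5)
\textbf{Gap: boundedness of the preimages.} Your argument is sound up to the step you flag yourself, but that step is the whole content of the lemma, and you only grant it. Nothing in the proposal shows that $g_n=e^{t_n\sL}g/\|w(t_n)\|$ is bounded. Neither suggested route works as stated. Write $P_{\rm K}$ for the orthogonal projection onto ${\rm K}$. Since $e^{t\sL}g=g+\int_0^t w(s)\,{\rm d}s$, the ${\rm K}$-component of $e^{t_n\sL}g$ is $P_{\rm K}g+\int_0^{t_n}P_{\rm K}w(s)\,{\rm d}s$. This can be of order $t_n\sup_{s\le t_n}\|w(s)\|$, so there is no reason for it to be $O(\|w(t_n)\|)$, and $g_n$ itself may well be unbounded. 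The appeal to ``closedness of ${\rm Ran}$ on finite-codimension pieces'' is not an argument.

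\textbf{The missing idea.} You never need to control the ${\rm K}$-component. The operator $\sL=\cJ\cH$ annihilates ${\rm K}$, and ${\rm K}\subset Z$ because membership in $Z$ only involves $\cH f$. So replace $g_n$ by $p_n:=g_n-P_{\rm K}g_n$, which is exactly the projection you already used for $w$. Then $\sL p_n=v_n$ and $p_n\in Z\cap{\rm K}^\perp$. By \eqref{coercivityH} and conservation of the energy,
\[
c\|p_n\|^2\le\langle\cH p_n,p_n\rangle=\langle\cH g_n,g_n\rangle=\frac{\langle\cH g,g\rangle}{\|w(t_n)\|^2}\longrightarrow 0 .
\]
Thus $p_n\to0$ while $\sL p_n\to v$, and closedness of $\sL$ gives $v=0$. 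This is precisely the device in the paper's proof: it splits the preimage into a ${\rm K}$-part and a ${\rm K}^\perp\cap\tilde Z$-part and bounds only the latter via coercivity and energy conservation.

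\textbf{Two side points.} First, the argument needs $e^{t_n\sL}g\in Z$, i.e.\ $e^{t\sL}Z\subseteq Z$, which you assert without proof. It follows from $\frac{\rm d}{{\rm d}t}\langle\cH e^{t\sL}g,f_1^{\rm odd}\rangle=-\langle e^{t\sL}g,\cH f_0^{\rm odd}\rangle=0$ and the analogous identity for $f_1^{\rm ev}$. Second, since $p_n\to0$ strongly, neither $\tilde Z$ nor Lemma \ref{banal_intersection} is needed. Your normalization $g\in\tilde Z$ can therefore be dropped; it is not finite-dimensional linear algebra anyway, since ${\rm Ker}(\sL)$ is infinite-dimensional. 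The reduction to $g\in Z$, which you carried out correctly, suffices.

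\textbf{Comparison with the paper.} Once repaired, your route differs from the paper's in two neutral ways. You obtain $v\in{\rm K}$ directly from coercivity on $w_\perp$, instead of via weak limits, Lemma \ref{lemZ} and Radon--Riesz. Your normalization $g\in Z$ also makes the energy identity above exact. The paper instead uses $(e^{t_n\sL}g-g)/\|e^{t_n\sL}f\|\in{\rm Ran}(\sL)\subseteq\tilde Z$ so as not to normalize $g$, at the price of a cross term $\langle\cH e^{t_n\sL}g,g\rangle/\|e^{t_n\sL}f\|^2$ in the energy.
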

\begin{proof}
We argue by contradiction and suppose that there exists a sequence of times $t_n \geq 0$ such that  $\| e^{t_n\sL} f \|\to +\infty$ as $n \to +\infty$. 
Let $h_n :={e^{t_n \sL} f}/{\| e^{t_n \sL} f \|}$. Up to a subsequence, $h_n$ converges  to some $ h \in \Tilde{Z}$ weakly in $L^2_0({\rm D})$. We claim that the convergence is automatically strong and $h \in K$. Indeed, by the conservation of the energy $\cH$ along the trajectories of the semigroup, $\langle \cH e^{t_n \sL} f, e^{t_n \sL} f \rangle = \langle \cH f, f \rangle$, and, dividing by $\| e^{t_n \sL} f \|^2$, we get $\langle \cH h_n, h_n \rangle \to 0$ as $n \to +\infty$. On the other hand, since $ \Delta_{D\!N}^{-1}$ is a compact operator, $\Delta_{D\!N}^{-1} h_n \to \Delta_{D\!N}^{-1} h $ as $n\to +\infty$ strongly in $L^2_0({\rm D})$ and, by property \ref{positivesemidefinite} of $\tilde Z \subset Z$,
\begin{equation*}
    \langle \cH h_n, h_n \rangle = \| h_n \|^2 - \langle k^2 \Delta_{D\!N}^{-1} h_n, h_n \rangle \stackrel{n\to +\infty}{\longrightarrow} 1 - \langle k^2 \Delta_{D\!N}^{-1} h, h \rangle \geq \langle \cH h, h \rangle \geq 0.
\end{equation*}
Since $\langle \cH h_n, h_n \rangle \rightarrow 0$, we have $\| h \| = 1$ and $h \in K$ by Lemma \ref{lemZ}. By the Radon-Riesz property of the Hilbert space $L^2_0({\rm D})$, $h_n \to h$ as $n\to +\infty$ strongly. This concludes the proof of the claim. Since $\sL g = f$, 
\begin{equation}\label{hnid}
    h_n = \frac{f + \sL \tilde g_n}{\| e^{t_n \sL} f \|} = o(1) + \sL  \tilde g_n ,\quad \text{with }\tilde g_n := \frac{e^{t_n\sL} g - g}{\| e^{t_n \sL} f \|} = \frac{\sL \int_0^{t_n} e^{s\sL}g\, {\rm d}s}{\| e^{t_n \sL} f \|} \in {\rm Ran}(\sL)  \subseteq \tilde Z.
\end{equation}
We
split $\tilde g_n = k_n + g_n $, where $k_n \in {\rm K} \cap \tilde Z$ and $g_n \in {\rm K}^\perp \cap \tilde Z$.  It follows that  $ \sL g_n = \sL \tilde g_n$ and, by \eqref{coercivityH} and exploiting again the conservation of $\cH$ along trajectories,
$$
\begin{aligned}
c\| g_n \|^2 \leq \langle \cH g_n , g_n \rangle &= \langle \cH (\tilde g_n - k_n) ,  (\tilde g_n - k_n) \rangle = \langle \cH \tilde g_n,  \tilde g_n  \rangle \\
&\stackrel{\eqref{hnid}}{=} \frac{\langle \cH e^{t_n\sL} g,  e^{t_n\sL} g  \rangle}{ \| e^{t_n \sL} f \|^2 } + o(1)  =  \frac{\langle \cH g,   g  \rangle}{\| e^{t_n \sL} f \|^2 } + o(1) \to 0\quad \text{as }n\to +\infty.
\end{aligned}
$$
We have thus found a sequence $\{g_n\}_{n\in \N}\subset \tilde Z$ such that $\sL g_n = h_n + o(1)$ converges to $h\in {\rm K}$, with $\|h\|=1$. By the corollary result of Lemma \ref{thanksGodforthislemma} we conclude that $h=0$, which is a contradiction. 
\end{proof}
In view of the previous result, one is able to prove that a general initial datum in $L_{0}^{2}({\rm D})$ may grow at most linearly in time. Indeed, we know that this estimate is sharp, since, for example, $e^{t \sL}f_1^{\rm odd}=tf_0^{\rm odd}+f_1^{\rm odd} $.
\begin{lem}
   There exists a constant $C > 0$ such that, for every $f \in L_{0}^{2}({\rm D})$,
    \begin{equation}\label{finalestimate}
        \| e^{t \sL} f \| \leq C (1 + t) \| f \| \qquad {\rm{for \, every}} \, t \geq 0.
    \end{equation}
\end{lem}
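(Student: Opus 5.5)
The plan is to upgrade the pointwise bound of Lemma \ref{thanksGodforthislemma} to a uniform one via the Banach--Steinhaus theorem, and then remove the regularity requirement on $f$ with a resolvent trick.

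First, I would record that $\sL$ generates a strongly continuous semigroup on $L^2_0({\rm D})$. Write $\sL = \cJ + k^2 \cJ \Delta_{D\!N}^{-1}$. Here $\cJ$ is skew-adjoint, hence generates a unitary group. The operator $f\mapsto -(u_{eq}\cdot\nabla)\Delta_{D\!N}^{-1}f$ is bounded on $L^2_0({\rm D})$, because $\Delta_{D\!N}^{-1}$ gains two derivatives and $u_{eq}$ is smooth on $\overline{\rm D}$. By bounded perturbation (\cite[Chapter III, Theorem 1.3]{EngelNagel}), $e^{t\sL}$ exists and $\|e^{t\sL}\|\le Me^{\omega t}$. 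In particular every real $\lambda>\omega$ lies in the resolvent set of $\sL$, and $R:=(\lambda-\sL)^{-1}$ is bounded from $L^2_0({\rm D})$ into ${\rm Dom}(\sL)$ with its graph norm.

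Second, I would equip ${\rm Dom}(\sL)$ with the graph norm $\|g\|_{\sL}:=\|g\|+\|\sL g\|$. This makes it a Banach space, since $\sL$ is closed. For each $t\ge0$ consider the operator $T_t g := e^{t\sL}\sL g$. Each $T_t$ is bounded from $({\rm Dom}(\sL),\|\cdot\|_{\sL})$ to $L^2_0({\rm D})$. By Lemma \ref{thanksGodforthislemma}, for every fixed $g$ the family $\{T_t g\}_{t\ge0}$ is bounded, since $\sL g\in{\rm Ran}(\sL)$. The uniform boundedness principle then yields a constant $M_1$ with
\[
\sup_{t\ge0}\|e^{t\sL}\sL g\|\le M_1\|g\|_{\sL}\quad\text{for all } g\in{\rm Dom}(\sL).
\]
Using $e^{t\sL}g = g+\int_0^t e^{s\sL}\sL g\,{\rm d}s$, I then get $\|e^{t\sL}g\|\le \|g\| + tM_1\|g\|_{\sL}$ for every $g\in{\rm Dom}(\sL)$.

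Third, I would handle a general $f\in L^2_0({\rm D})$. Fix $\lambda$ as above and write $f=(\lambda-\sL)Rf=\lambda Rf-\sL Rf$, so that
\[
e^{t\sL}f=\lambda e^{t\sL}Rf-e^{t\sL}\sL Rf.
\]
The second term is bounded by $M_1\|Rf\|_{\sL}\le C\|f\|$. The first term is bounded by $\lambda\big(\|Rf\|+tM_1\|Rf\|_{\sL}\big)\le C(1+t)\|f\|$. Together these give \eqref{finalestimate}.

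The only delicate point I expect is the functional-analytic setup: making sure the semigroup is well defined on $L^2_0({\rm D})$ so that $\lambda$ in the resolvent set exists, and that the Banach--Steinhaus theorem is applied on a genuine Banach space. The graph-norm space handles the latter. The real content has already been supplied by Lemma \ref{thanksGodforthislemma}.
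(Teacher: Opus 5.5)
Your argument is correct, but it is organized differently from the paper's. The paper applies the uniform boundedness principle directly on $L^2_0({\rm D})$ to the family $\{(1+t)^{-1}e^{t\sL}\}_{t\ge0}$, so it only needs a bound depending on $f$. It gets this bound by telescoping over integer times, $e^{n\sL}f-f=\sum_{j=0}^{n-1}e^{j\sL}(e^{\sL}f-f)$. It then notes that $e^{\sL}f-f=\sL\int_0^1e^{s\sL}f\,{\rm d}s\in{\rm Ran}(\sL)$, so each summand is bounded by Lemma~\ref{thanksGodforthislemma}, and fills in non-integer times with $\sup_{r\in[0,1]}\|e^{r\sL}\|<\infty$. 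You instead apply Banach--Steinhaus on the graph-norm space $({\rm Dom}(\sL),\|\cdot\|_{\sL})$ to $g\mapsto e^{t\sL}\sL g$. This upgrades Lemma~\ref{thanksGodforthislemma} to the uniform estimate $\sup_{t\ge0}\|e^{t\sL}\sL g\|\le M_1\|g\|_{\sL}$, and you then reach a general $f$ through the resolvent splitting $f=\lambda Rf-\sL Rf$. Both proofs rest on the same mechanism: a bounded map from $L^2_0({\rm D})$ into ${\rm Dom}(\sL)$ (your $R$, the paper's $\int_0^1e^{s\sL}\,{\rm d}s$) that turns a general datum into one whose image under $\sL$ lies in ${\rm Ran}(\sL)$. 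The paper's version is shorter and needs neither the graph norm nor a point of the resolvent set. Yours costs a little more setup, which you justify correctly via bounded perturbation of the skew-adjoint $\cJ$. In exchange it gives a quantitative intermediate statement: a uniform-in-time bound on ${\rm Ran}(\sL)$ controlled by the graph norm, and the refined estimate $\|e^{t\sL}g\|\le\|g\|+tM_1\|g\|_{\sL}$ on ${\rm Dom}(\sL)$. That estimate makes explicit that the linear growth is driven only by $\sL g$.
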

\begin{proof}
By the  uniform boundedness principle, it suffices to show \eqref{finalestimate} with a constant $C$ depending on $f$.
For every integer $n\geq 1$,
we write 
$    e^{n \sL} f - f = \sum_{j=0}^{n-1} e^{j \sL} (e^{\sL} f - f)
$,
with $e^{\sL} f - f =  \sL \int_{0}^{1} e^{s \sL} f {\rm d}s \in {\rm Ran}(\sL)$. By Lemma \ref{thanksGodforthislemma} and since $e^{\sL} - {\rm Id}$ is a bounded operator, one has a positive constant $C_f>0$ such that
\begin{equation}\label{secondestimate}
    \| e^{n \sL} f - f \| \leq \sum_{j=0}^{n-1} C_f \| e^{\sL} f - f \| \leq n C_f   \|f\| .
\end{equation}
Let $t >0$ and split $t=n+r$ with integer $n \geq 0$, $ r \in [0,1)$.  By \eqref{secondestimate} we conclude
\begin{equation*}
    \| e^{t \sL} f \| \leq \| e^{r \sL} f \| + \| e^{r \sL} (e^{n \sL} f - f ) \| \leq  C_f (1 + t) \| f \|  ,
\end{equation*}
as desired.
\end{proof}

We conclude our exposition with the following
\begin{proof}[Proof of Theorem \ref{lineardyn_Thm}]
Let us now consider the evolution $w(t)$ of an initial datum $w_0 \in L^1(\R^2) \cap L^p(\R^2)$, $p>2$, under the strongly continuous semigroup generated by $\cL_{eq}$ in \eqref{cLeq}. As described in Section \ref{reductiontodisk}, we have $w(t) = w_{ext}(t) + w_{int}(t)$, with $w_{ext}$ given by the pure transport equation \eqref{wext} and $w_{int}(t)= \widetilde w(t) + \mu {\bf 1}_{{\rm D}}$ where $\mu$ is a constant term and  the zero-average profile  $\widetilde w$ evolves according to the forced linear equation \eqref{complete-evolution}.  By Duhamel's formula,
    \begin{equation*}
        \widetilde w(t) = e^{t \cL_{eq}} \widetilde w_{0}(0) - \frac{\mu}{2} \int_{0}^{t} e^{(t-s) \cL_{eq}} f_{1}^{\rm ev} {\rm d}s,
    \end{equation*}
Since, by Lemma \ref{generalizedkernel}, $\cL_{eq} f_1^{\rm ev} = -f_{0}^{\rm ev} \in {\rm Ker}(\cL_{eq})$, we have $e^{(t-s)\cL_{eq}} f_{1}^{ev} = f_{1}^{\rm ev} - (t-s) f_{0}^{\rm ev}$. In view of Section \ref{passingtosL} the term $e^{t \cL_{eq}} \widetilde w(0) $ coincides with $e^{t \sL} \widetilde w(0) $, hence
 \begin{equation}\label{Duhamelsolves}
      \widetilde w(t) = e^{t \sL} \widetilde w(0) - \frac{\mu}{2} t f_1^{\rm ev} + \frac{\mu}{4}  t^2 f_0^{\rm ev} .
 \end{equation}
By \eqref{finalestimate},
$
\| e^{t \sL} \widetilde w(0) \|\leq C (1 + t) \| \widetilde w(0) \| $, for every $t\geq 0
$, and we conclude the estimate \eqref{quadraticgrowth}.
\end{proof}
\subsection*{Acknowledgements}
 We thank  M.~Colombo, M.~Dolce and G.~Cao-Labora  for
helpful discussions. P.~Ventura was supported by the Swiss State
Secretariat for Education, Research and Innovation (SERI) under contract number MB22.00034 through the project TENSE.

\end{document}